\documentclass[a4paper]{amsart}

\usepackage[all]{xy}

\usepackage{amsmath}
\usepackage{amssymb}
\usepackage{amsfonts}
\usepackage{amsthm}
\usepackage{amscd}

\usepackage{mathrsfs}

\usepackage{mathtools}

\usepackage{pifont}
\usepackage{latexsym}

\usepackage{graphicx} 

\usepackage{stmaryrd}

\usepackage{comment}

\usepackage{enumitem}

\newcommand{\dalgb}{\mathsf{Alg}}
\newcommand{\dalgd}{\mathsf{Alg}_{\Diamond}}

\newcommand{\vl}{\models}
\newcommand{\yy}{\rightarrow}

\newcommand{\pscom}{\mbox{$\rightarrow$}}

\newcommand{\power}{\mathcal{P}}

\newcommand{\nsystem}{\mathcal{N}}

\newcommand{\da}{\mbox{$\downarrow$}}
\newcommand{\ua}{\mbox{$\uparrow$}}

\newcommand{\thn}{\ \Rightarrow\ }
\newcommand{\eq}{\ \Leftrightarrow\ }

\newcommand{\gm}{\Gamma}
\newcommand{\dl}{\Delta}
\newcommand{\ld}{\Lambda}

\newcommand{\logic}[1]{\mathsf{#1}}

\newcommand{\propgl}{\logic{GL}}

\newcommand{\psgld}{\propgl_{+}^{\top\bot}}

\newcommand{\db}{\vdash}

\newcommand{\gldflt}{\mathcal{F}_{\mathrm{GLD}}}
\newcommand{\gldidl}{\mathcal{I}_{\mathrm{GLD}}}

\newcommand{\propvar}{\mathsf{Prop}}

\newcommand{\nqgl}{\mathsf{NQGL}}

\newcommand{\cglm}{\mathcal{M}_{\mathsf{GL}}}
\newcommand{\cgld}{\mathcal{D}_{\mathsf{GLD}}}
\newcommand{\cmdl}{\mathcal{D}}

\newcommand{\qiop}{\mathrm{Th}_{\mathit{q}}}

\newcommand{\clhomo}{\mathit{H}}
\newcommand{\clsub}{\mathit{S}}
\newcommand{\clprod}{\mathit{P}}
\newcommand{\clprodr}{\mathit{P}_{\mathit{R}}}
\newcommand{\cliso}{\mathit{I}}
\newcommand{\clqv}{\mathit{Q}}

\newcommand{\modelop}{\mathrm{Mod}}

\newcommand{\compl}[1]{{#1}^{\mathrm{c}}}

\newcommand{\lansqcup}{f_{\sqcup}}
\newcommand{\lansqcap}{f_{\sqcap}}
\newcommand{\lanzero}{c_{0}}
\newcommand{\lanone}{c_{1}}
\newcommand{\laneq}{\approx}
\newcommand{\lanbox}{f_{\Box}}
\newcommand{\landiamond}{f_{\Diamond}}

\numberwithin{equation}{section}

\theoremstyle{plain}
\newtheorem{theorem}{Theorem}[section]
\newtheorem{lemma}[theorem]{Lemma}

\newtheorem{corollary}[theorem]{Corollary}

\theoremstyle{definition}
\newtheorem{definition}[theorem]{Definition}

\date{}

\begin{document}
\title{A proof system for the positive fragment of $\propgl$}
\author{Yoshihito Tanaka}
\address{Kyushu Sangyo University}
\keywords{positive modal logic, provability logic, $\omega$-rule}

\maketitle

\begin{abstract}
In this paper, we present a proof system $\psgld$, 
which is based on 
a sequent system 
$\logic{K}_{+}^{\top\bot}$ given by Dunn, 
for the positive fragment of $\propgl$.  
Positive modal formulas are modal formulas that 
contain neither negation symbols nor implication symbols.    
More precisely, they are modal formulas constructed from 
the connectives $\lor$, $\land$, $\Diamond$, $\Box$, $\bot$, $\top$, 
and propositional variables. 
The logic $\propgl$ is the least normal modal logic that contains 
$\logic{K}$ and the L\"{o}b formula 
$\Box(\Box p\supset p)\supset\Box p$. 
Following Dunn \cite{dnn95}, 
a sequent is 
an expression of the form $\phi\db\psi$, 
where $\phi$ and $\psi$ are positive 
modal formulas. 
We present a proof system $\psgld$ for sequents with the property that 
a sequent $\phi\db\psi$ is provable in $\psgld$, if and only if 
$\phi\supset\psi$ is provable in $\propgl$.   
\end{abstract}

\section{Introduction}

In this paper, we present a proof system $\psgld$, 
which is based on 
a sequent system 
$\logic{K}_{+}^{\top\bot}$ given by Dunn \cite{dnn95}, 
for the positive fragment of $\propgl$.  
Positive modal formulas are modal formulas that 
contain neither negation symbols nor implication symbols.    
More precisely, they are modal formulas constructed from 
the connectives $\lor$, $\land$, $\Diamond$, $\Box$, $\bot$, $\top$, 
and propositional variables. 
The logic $\propgl$ is the least normal modal logic that contains 
$\logic{K}$ and the L\"{o}b formula 
$\Box(\Box p\supset p)\supset\Box p$. 
Following Dunn \cite{dnn95}, 
a sequent is 
an expression of the form $\phi\db\psi$, 
where $\phi$ and $\psi$ are positive 
modal formulas. 
We present a proof system $\psgld$ for sequents with the property that 
a sequent $\phi\db\psi$ is provable in $\psgld$, if and only if 
$\phi\supset\psi$ is provable in $\propgl$.

Proof systems for the positive modal formulas 
in the style of Dunn \cite{dnn95} 
were presented 
for several modal logics.
Dunn \cite{dnn95} provided proof systems for 
the positive fragments of 
$\logic{K}$, 
$\logic{T}$, 
$\logic{B}$, 
$\logic{S4}$, 
$\logic{S5}$, 
$\logic{KB}$, 
$\logic{K4}$, and
$\logic{K5}$, 
and proved their Kripke completeness.  
While $\Box$ and $\Diamond$ in Dunn \cite{dnn95} have dual properties,    
Celani and Jansana \cite{cln-jns97} treated these operators 
independently. 
They presented
sequent systems for
$\logic{K}$, 
$\logic{T}$, 
$\logic{4}$, 
$\logic{B}$, 
$\logic{S}$, 
$\logic{E}$, and
$\logic{D}$, 
in which $\Box$ and $\Diamond$ are independent, 
and proved the
completeness theorem of the systems with respect to the Kripke frames for 
intuitionistic modal logics. 
As for $\propgl$, 
it was proved in Dashkov \cite{dsh12} that 
the strictly positive fragment, that is, 
formulas constructed from $\land$, $\Diamond$, $\top$, 
and propositional variables, of $\propgl$
coincides with that of $\logic{K4}$. 
However, the positive fragment of $\propgl$ properly 
contains that of $\logic{K4}$, because, for example, it 
contains $\Box\bot\lor\Diamond\Box\bot$. 
However, it seems that a proof system for the positive fragment of $\propgl$ 
has not been found yet.

Our proof system contains $\omega$-rules, that is, 
inference rules that have countably many premises. 
In Tanaka \cite{tnk18}, a Gentzen system $\nqgl$
for 
a predicate extension of $\propgl$ was introduced 
and its Kripke completeness was proved. 
The system $\nqgl$ does not include the L\"{o}b formula as an 
axiom, but has
an $\omega$-rule of the following form: 
$$
\dfrac{\gm\yy\ld,\Diamond^{n}\top,\ \text{($\forall n\in\omega$)}}
{\gm\yy\dl}. 
$$
As the above $\omega$-rule contains neither negation symbols nor 
implication symbols, we make use of a variation of the above $\omega$-rule to 
define our proof system $\psgld$.

We exploit properties of equational logics and algebras to prove the 
completeness of our system $\psgld$.  
First, we introduce a class $\cgld$ of GLD-lattices. 
It follows immediately that 
$\psgld$ is sound and complete with respect to $\cgld$. 
Next, we show that for every $A\in\cgld$ there 
exist a modal algebra $B\in\cglm$,
where $\cglm$ is the class of all modal algebras that 
validate $\propgl$,
and an embedding 
$f:A\yy B$ of modal distributive lattices. 
Then, by Mal'cev's theorem, it follows that $\psgld$ is sound 
and complete with respect to $\cglm$. 
The embedding we present here is a modification of 
the representation theorem given in Sasaki and Tanaka \cite{ssk-tnk24}. 
Similar proofs of completeness for strictly positive modal logics 
that make use of such embeddings 
can be found in Stokkermans \cite{stokkermans08} and in 
\cite{kkt-krc-tnk-wlt-zkh} by 
Kikot, Kurucz, Tanaka, Wolter, and Zakharyaschev.

The paper is organized as follows: 
In Section~\ref{sec:gld}, 
we introduce GLD-lattices. 
In Section~\ref{sec:emb}, 
we show that for every 
GLD-lattice $A$, there exists an embedding of modal distributive lattices 
mentioned above. 
In Section~\ref{sec:eqlog}, 
we introduce an equational logic for 
modal distributive lattices. 
In Section~\ref{sec:proofsystem}, 
we present a proof system $\psgld$ for the 
positive fragment of $\propgl$.

\section{GLD-lattices}\label{sec:gld}

In this section, 
we introduce GLD-lattices. 
We also clarify the notation and 
conventions used throughout this paper, and 
recall some facts for later use.

For each set $A$ and $S\subseteq A$, we write $\compl{S}$ 
for $A\setminus S$. 
For each ordered set $\langle A,\leq\rangle$ and $S\subseteq A$
we write $\ua S$ for the upward closure of $S$. 
%
%
%

\begin{definition}\label{modal-algebra}
An algebra $\langle A;\sqcup,\sqcap,-,\Box,0,1\rangle$ is called 
a {\em modal algebra}
if its reduct $\langle A;\sqcup,\sqcap,-,0,1\rangle$ is a Boolean algebra 
and $\Box$ is a unary operator on $A$. 
For each $x$ and $y$ in $A$, we write $\Diamond x$  for 
$-\Box-x$ and 
$x\pscom y$ for $-x\sqcup y$. 
\end{definition}

In some papers, modal algebras are defined to be 
Boolean algebras with a unary operator $\Box$ satisfying
$\Box 1=1$ and 
$\Box (x\sqcap y)= \Box x\sqcap\Box y$
for all $x$ and $y$ in $A$. 
In this paper we follow the definition of Do\v{s}en \cite{dsn89}.

\begin{definition}\label{defglalg}
A modal algebra $A$ is called a 
{\em GL-algebra}, 
if it satisfies the following: 
\begin{enumerate}
\item\label{prtp}
$\Box 1=1$; 
\item\label{prmt}
$\Box (x\sqcap y)= \Box x\sqcap\Box y$
for every $x$ and $y$ in $A$;  
\item
$\Box (\Box x\pscom x)\leq \Box x$
for every $x$ in $A$.  
\end{enumerate}
We write $\cglm$ for the class of all GL-algebras.
\end{definition}

The following theorem is trivial, but we use it in Section \ref{sec:proofsystem}.

\begin{theorem}\label{propglcompleteness}
$\propgl$ is sound and complete with respect to $\cglm$. 
\end{theorem}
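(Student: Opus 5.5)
The plan is the standard Lindenbaum--Tarski argument for normal modal logics, specialised to $\propgl$. First fix what "sound and complete with respect to $\cglm$" means. Interpret modal formulas in a modal algebra $A$ via valuations $v:\propvar\yy A$: $\bot,\top,\lor,\land,\neg$ go to $0,1,\sqcup,\sqcap,-$, and $\Box$ goes to the operator $\Box$. Say that $\phi$ is valid in $A$ if $v(\phi)=1$ for every valuation $v$. The claim is then that $\propgl\vdash\phi$ if and only if $\phi$ is valid in every $A\in\cglm$.

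\emph{Soundness.} Argue by induction on derivations in $\propgl$, taking an axiomatisation consisting of the classical tautologies, the axiom K, and the L\"ob formula, closed under modus ponens, necessitation and uniform substitution.
\begin{enumerate}
\item Tautologies are valid because the reduct of $A$ is a Boolean algebra.
\item The axiom K, $\Box(p\supset q)\supset(\Box p\supset\Box q)$, is valid by condition (\ref{prmt}) of Definition~\ref{defglalg}. That condition gives monotonicity of $\Box$, and then $\Box(x\pscom y)\sqcap\Box x=\Box((x\pscom y)\sqcap x)\leq\Box y$.
\item The L\"ob formula is valid by the third condition of Definition~\ref{defglalg}.
\item Modus ponens preserves validity: if $v(\phi)=1$ and $-v(\phi)\sqcup v(\psi)=1$, then $v(\psi)=1$.
\item Necessitation preserves validity by condition (\ref{prtp}), $\Box 1=1$.
\item Substitution preserves validity because a substitution composed with a valuation is again a valuation.
\end{enumerate}

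\emph{Completeness.} Build the Lindenbaum--Tarski algebra. Define $\phi\equiv\psi$ iff $\propgl\vdash\phi\leftrightarrow\psi$. This is a congruence for the Boolean connectives by propositional reasoning. It is also a congruence for $\Box$: from $\vdash\phi\leftrightarrow\psi$, necessitation and K give $\vdash\Box\phi\leftrightarrow\Box\psi$. Let $L$ be the quotient, with operations induced by the connectives. Then $L$ is a Boolean algebra whose top element is the class of theorems, and $\Box[\phi]=[\Box\phi]$ is a well-defined unary operator.

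Next check that $L\in\cglm$:
\begin{itemize}
\item $\Box 1=1$ since $\vdash\Box\top$ by necessitation;
\item $\Box(x\sqcap y)=\Box x\sqcap\Box y$ since $\vdash\Box(\phi\land\psi)\leftrightarrow\Box\phi\land\Box\psi$ in any normal logic;
\item $\Box(\Box x\pscom x)\leq\Box x$ is exactly the substitution instances of the L\"ob formula, with $x\leq y$ meaning $\vdash\phi\supset\psi$.
\end{itemize}

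Finally, take the canonical valuation $v(p)=[p]$. An induction on formulas gives $v(\phi)=[\phi]$. So if $\phi$ is valid in $\cglm$, then $[\phi]=1$ in $L$, which means $\vdash\phi\leftrightarrow\top$ and hence $\propgl\vdash\phi$.

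There is no serious obstacle. The only point that needs care is the paper's convention, following Do\v{s}en, that a modal algebra need not satisfy normality, so normality has to come from Definition~\ref{defglalg}. This affects two places. In soundness, the validity of K and of necessitation must be derived explicitly from (\ref{prtp}) and (\ref{prmt}). In completeness, the congruence property for $\Box$ must be obtained from the logic's rules rather than assumed of the algebra.
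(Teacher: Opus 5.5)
Your proposal is correct and is the standard soundness-plus-Lindenbaum--Tarski argument the paper has in mind when it calls this theorem trivial and omits the proof; it is the same construction the paper later carries out for $\psgld$ in Lemma~\ref{lemlind} and Theorem~\ref{psgldcompleteness}. You were also right that, under Do\v{s}en's convention, normality must be taken from conditions (\ref{prtp}) and (\ref{prmt}) of Definition~\ref{defglalg}, both for validating K and necessitation and for making $\Box$ a congruence on the Lindenbaum algebra.
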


\begin{definition}\label{defgldalg}
A modal algebra $A$ is called a {\em GLD-algebra}, if it 
satisfies 
\eqref{prtp} and \eqref{prmt} of Definition \ref{defglalg} and the 
following: 
\begin{enumerate}
\item\label{transitivity}
$\Box x\leq\Box\Box x$ for every $x\in A$;
\item\label{diamondstar}
$\bigsqcup_{n\in\omega}{\Box}^{n}0=1$. 
\end{enumerate}
\end{definition}

\begin{definition}\label{defmodaldl}
An algebra $\langle A;\sqcup,\sqcap,\Diamond,\Box,0,1\rangle$ is called 
a {\em modal distributive lattice}
if its reduct $\langle A;\sqcup,\sqcap,0,1\rangle$ is a 
distributive lattice 
and $\Box$ and $\Diamond$ are unary operators on $A$ 
satisfying $\Box 1=1$, $\Diamond 0=0$, and 
\begin{equation}\label{boxdia}
\Box (x\sqcup y)\leq \Box x\sqcup\Diamond y \text{ and } 
\Box x\sqcap\Diamond y\leq\Diamond (x\sqcap y)
\end{equation}
for all $x$ and $y$ in $A$. 
Let $A$ and $B$ be modal distributive lattices. A function 
$f:A\yy B$ is called a {\em homomorphism of modal distributive lattices} 
if it is a homomorphism of bounded lattices and 
satisfies $f(\Diamond x)=\Diamond f(x)$ and 
$f(\Box x)=\Box f(x)$ for every $x\in A$.  
A homomorphism of modal distributive lattices 
is called a {\em monomorphism} if it is injective 
and is called an {\em isomorphism} if it is injective and surjective. 
\end{definition}

Modal distributive lattices were introduced by Dunn \cite{dnn95}, 
in which they are called {\em positive modal algebras}. 
However, for consistency with Definition \ref{defglalg}, 
we use the term modal distributive lattices.

\begin{definition}\label{defgldl}
A modal distributive lattice $A$ is called 
a {\em GLD-lattice}
if it
satisfies the following: 
\begin{enumerate}
\item\label{minmax}
$\Diamond 0=0$ and $\Box 1=1$; 
\item\label{distributivity}
$\Diamond (x\sqcup y)= \Diamond x\sqcup\Diamond y$ and 
$\Box (x\sqcap y)= \Box x\sqcap\Box y$
for every $x$ and $y$ in $A$;  
\item\label{gldltransitivity}
$\Diamond \Diamond x\leq\Diamond x$ and 
$\Box x\leq\Box\Box x$
for every $x$ in $A$;   
\item\label{ioriand}
$\bigsqcup_{n\in\omega}\left(x\sqcap\Box^{n}0\right)= x$ and 
$\bigsqcap_{n\in\omega}\left(x\sqcup\Diamond^{n}1\right)= x$ 
for every $x$ in $A$. 
\end{enumerate}
We write $\cgld$ for the class of all GLD-lattices. 
\end{definition}

It is straightforward to show that every GLD-algebra is a GLD-lattice. 
For each modal algebra $A$, we write $A|_{\cmdl}$ for the modal distributive 
lattice reduct of $A$, 
and for each class $C$ of modal algebras, 
we write $C|_{\cmdl}$ for $\{A|_{\cmdl}\mid A\in C\}$.

\begin{definition}
Let $A$ be a modal distributive lattice. A non-empty set $F\subseteq A$ is called 
a {\em filter} of $A$ if it satisfies the following: 
\begin{enumerate}
\item
$\ua F=F$;
\item
for every $x$ and $y$ in $A$,  
if $x$ and $y$ are in $F$, then $x\sqcap y\in F$ . 
\end{enumerate}
A filter $F$ is said to be prime, if $0\not\in F$ and 
for every 
$x\sqcup y\in F$, either $x\in F$ or $y\in F$. 
A prime filter $F$ is called a {\em GLD-filter}, if 
there exists $n\in\omega$ such that $\Diamond^{n}1\not\in F$ and 
$\Box^{n}0\in F$. 
We write $\gldflt(A)$ for the set of all GLD-filters of $A$. 

%
%
%
%
\end{definition}

For each modal distributive lattice $A$ and $x\in A$, 
we define $\eta(x)\subseteq\gldflt(A)$
%
%
by 
$
\eta(x)=
\left\{
F\in\gldflt(A)\mid x\in F
\right\}.
%
%
$

\begin{definition}\label{neighborhoodframes}
A {\em neighborhood frame} is a pair
$\langle W, \nsystem\rangle$, where
$W$ is a non-empty set and 
$\nsystem$ is a function from $W$ to  $\power(\power(W))$. 
For each neighborhood frame $Z=\langle W,\nsystem\rangle$, 
the {\em 
%
%
dual modal algebra} of $Z$, which is denoted by
$\dalgb(Z)$,  is 
$$
\dalgb(Z)=
\langle
\power(W);\cup,\cap,\compl{(-)},\Box_{Z},\emptyset,W
\rangle, 
$$
where
$$
\Box_{Z}X=\{w\in W\mid X\in\nsystem(w)\}
$$
for any $X\subseteq W$.
A neighborhood frame 
$Z=\langle W, \nsystem\rangle$ 
is called a 
{\em GL-frame}, 
if $\dalgb(Z)$ is a GL-algebra. 

%
%
\end{definition}

\begin{definition}\label{gldfltfrm}
Let $A$ be a GLD-lattice. 
We say that $Z=\langle\gldflt(A),\nsystem\rangle$
is a {\em GLD-filter frame} of $A$, if $\nsystem(F)$ is a 
filter of $\power(\gldflt(A))$ and 
satisfies the following
for every $F\in\gldflt(A)$:  
\begin{enumerate}
\item\label{etasubset}
$
\left\{
\eta(x)\mid \Box x\in F
\right\}\subseteq \nsystem(F)
$ 
and 
$
\left\{
\compl{\eta(x)}\mid \Diamond x\not\in F
\right\}\subseteq \nsystem(F)
$;
\item\label{etaemptyset}
$
\left\{
\eta(x)\mid \Box x\not\in F
\right\}\cap\nsystem(F)
=
\left\{
\compl{\eta(x)}\mid \Diamond x\in F
\right\}\cap\nsystem(F)=\emptyset
$.
\end{enumerate}

%
%
\end{definition}

\section{Embedding GLD-lattices into GL-algebras}\label{sec:emb}

In this section, we show that for every 
GLD-lattice $A$, there exist a GL-algebra $B$ and 
a monomorphism $\eta:A\yy B|_{\cmdl}$ of modal distributive lattices.

\begin{theorem}\label{gldflttheorem}
Let $A$ be a GLD-lattice. 
Suppose that $x$ and $y$ are in $A$ and $x\not\leq y$. 
Then, there exists a GLD-filter $F$ of $A$ 
%
%
such that 
$x\in F$ and $y\not\in F$. 
%
%
\end{theorem}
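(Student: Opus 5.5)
The plan is to use condition \eqref{ioriand} of Definition~\ref{defgldl} twice to find a single index $N$ and elements $a\not\leq b$ with $x,\Box^{N}0$ below-in-filter terms of $a$ and $y,\Diamond^{N}1$ covered by $b$. Then I would apply the ordinary prime filter theorem for distributive lattices to separate $a$ from $b$.

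\emph{Step 1 (choosing $n$).} By \eqref{ioriand}, $x=\bigsqcup_{n\in\omega}(x\sqcap\Box^{n}0)$. If $x\sqcap\Box^{n}0\leq y$ held for every $n$, then $y$ would be an upper bound of this family, so $x\leq y$, contrary to the hypothesis. Hence there is $n$ with $x\sqcap\Box^{n}0\not\leq y$.

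\emph{Step 2 (choosing $m$).} By the dual half of \eqref{ioriand}, $y=\bigsqcap_{m\in\omega}(y\sqcup\Diamond^{m}1)$. If $x\sqcap\Box^{n}0\leq y\sqcup\Diamond^{m}1$ for all $m$, then $x\sqcap\Box^{n}0$ would be a lower bound, hence $x\sqcap\Box^{n}0\leq y$, contrary to Step~1. So some $m$ satisfies $x\sqcap\Box^{n}0\not\leq y\sqcup\Diamond^{m}1$.

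\emph{Step 3 (aligning the indices).} This is the only real point, since a GLD-filter needs the \emph{same} $N$ for both $\Box^{N}0\in F$ and $\Diamond^{N}1\notin F$. First, $\Box$ is monotone because it preserves $\sqcap$, and $\Diamond$ is monotone because it preserves $\sqcup$, by \eqref{distributivity}. Next, \eqref{gldltransitivity} with $x=0$ gives $\Box 0\leq\Box\Box 0$, and with $x=1$ gives $\Diamond\Diamond 1\leq\Diamond 1$. Applying $\Box^{k}$ and $\Diamond^{k}$ respectively, induction yields $\Box^{k}0\leq\Box^{k+1}0$ and $\Diamond^{k+1}1\leq\Diamond^{k}1$ for all $k$. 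Put $N=\max(n,m)$. Then
\[
x\sqcap\Box^{n}0\leq x\sqcap\Box^{N}0
\quad\text{and}\quad
y\sqcup\Diamond^{N}1\leq y\sqcup\Diamond^{m}1 .
\]
Consequently $a:=x\sqcap\Box^{N}0\not\leq b:=y\sqcup\Diamond^{N}1$.

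\emph{Step 4 (separation).} By the prime filter theorem for distributive lattices, applied to the filter $\ua\{a\}$ and the ideal $\{z\mid z\leq b\}$, there is a prime filter $F$ with $a\in F$ and $b\notin F$. In particular $0\notin F$, since $0\leq b$. Because $F$ is upward closed, $x\in F$ and $\Box^{N}0\in F$. Because $y\leq b$ and $\Diamond^{N}1\leq b$, neither $y$ nor $\Diamond^{N}1$ lies in $F$. Hence $F$ is a GLD-filter with $x\in F$ and $y\notin F$.
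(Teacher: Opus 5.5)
Your proof is correct and is essentially the paper's argument. You use \eqref{ioriand} of Definition~\ref{defgldl} to get indices with $x\sqcap\Box^{k}0\not\leq y\sqcup\Diamond^{l}1$, align them at the maximum by monotonicity, and separate $x\sqcap\Box^{N}0$ from $y\sqcup\Diamond^{N}1$ with the prime filter theorem; your Steps 1--2 just spell out the extraction the paper states in one line. Your appeal to \eqref{gldltransitivity} in Step~3 is harmless but unnecessary, since $0\leq\Box 0$ and $\Diamond 1\leq 1$ hold trivially and monotonicity from \eqref{distributivity} alone propagates them, which is why the paper cites only \eqref{distributivity}.
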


\begin{proof}
By \eqref{ioriand} of Definition \ref{defgldl}, 
there exist $k$ and $l$ in $\omega$ such that 
$x\sqcap\Box^{k}0\not\leq y\sqcup \Diamond^{l}1$. 
Let $n=\max\{k,l\}$. 
By \eqref{distributivity} of Definition \ref{defgldl}, 
$\Diamond^{n}1\leq\Diamond^{l}1$ and 
$\Box^{k}0\leq\Box^{n}0$. 
Hence, 
\begin{equation*}\label{witness}
x\sqcap\Box^{n}0
\not\leq
y\sqcup\Diamond^{n}1. 
\end{equation*}
Then, by the prime filter theorem, 
there exists a prime filter $F$ of $A$ such that 
$x\sqcap\Box^{n}0\in F$ and 
$y\sqcup\Diamond^{n}1\not\in F$. 
It is obvious that $F$ is a GLD-filter. 
%
%
\end{proof}

The following lemma is essentially proved in 
Sasaki and Tanaka \cite{ssk-tnk24}. 
However, we present a proof here to make this paper self-contained.  

\begin{lemma}\label{sasaki-tanaka}
(Sasaki and Tanaka \cite{ssk-tnk24}). 
If $A$ is a GLD-algebra then it is a GL-algebra. 
\end{lemma}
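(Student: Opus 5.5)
The plan is to derive the L\"{o}b inequality $\Box(\Box x\pscom x)\leq\Box x$ directly from conditions \eqref{transitivity} and \eqref{diamondstar} of Definition \ref{defgldalg}. We approximate the element $a=\Box(\Box x\pscom x)$ from below by the elements $a\sqcap\Box^{n}0$ and handle each approximant by induction on $n$. Conditions \eqref{prtp} and \eqref{prmt} are common to GLD-algebras and GL-algebras, so only the third condition of Definition \ref{defglalg} needs proof. Throughout we use that $\Box$ is monotone, which follows from \eqref{prmt}.

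First fix $x\in A$ and put $a=\Box(\Box x\pscom x)$. By \eqref{diamondstar}, $\bigsqcup_{n\in\omega}\Box^{n}0=1$. In a Boolean algebra, meets distribute over any existing join: if $\bigsqcup_n y_n$ exists, then $a\sqcap\bigsqcup_n y_n=\bigsqcup_n(a\sqcap y_n)$. To check this, note that if $a\sqcap y_n\leq z$ for all $n$, then $y_n\leq -a\sqcup z$ for all $n$, hence $\bigsqcup_n y_n\leq -a\sqcup z$, and so $a\sqcap\bigsqcup_n y_n\leq z$. Applying this gives $a=\bigsqcup_{n\in\omega}(a\sqcap\Box^{n}0)$. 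It therefore suffices to show that $a\sqcap\Box^{n}0\leq\Box x$ for every $n\in\omega$.

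We prove this by induction on $n$. For $n=0$ we have $\Box^{0}0=0$, which is trivial. For the inductive step, assume $a\sqcap\Box^{n}0\leq\Box x$. By \eqref{transitivity}, $a=\Box(\Box x\pscom x)\leq\Box\Box(\Box x\pscom x)=\Box a$. Hence, using \eqref{prmt}, the induction hypothesis and monotonicity of $\Box$,
\begin{equation*}
a\sqcap\Box^{n+1}0\leq\Box a\sqcap\Box\Box^{n}0\sqcap a=\Box(a\sqcap\Box^{n}0)\sqcap\Box(\Box x\pscom x)\leq\Box(\Box x\sqcap(\Box x\pscom x))\leq\Box x,
\end{equation*}
where the last step uses $\Box x\sqcap(-\Box x\sqcup x)\leq x$.

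The main point needing care is the first step, namely pulling $a\sqcap-$ through the possibly infinite join $\bigsqcup_{n}\Box^{n}0$. The argument above shows that this is valid in any Boolean algebra, without assuming completeness, so I do not expect a real obstacle. The rest is a routine induction that uses transitivity to get $a\leq\Box a$.
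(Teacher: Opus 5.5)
Your argument is correct, and it takes a genuinely different route from the paper's.

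The paper argues by contradiction through its representation machinery. From $\Box(\Box x\pscom x)\not\leq\Box x$ it invokes Theorem~\ref{gldflttheorem}, and hence the prime filter theorem together with the remark that every GLD-algebra is a GLD-lattice. This yields a GLD-filter $F$ containing $\Box(\Box x\pscom x)$ and some $\Box^{m}0$, but not $\Box x$. By monotonicity $\Box^{m}x\in F$, so there is a threshold $n\geq 1$ with $\Box^{n+1}x\in F$ and $\Box^{n}x\notin F$. Transitivity gives $\Box^{n}(\Box x\pscom x)\in F$, and together with $\Box^{n}(\Box x\pscom x)\leq\Box^{n+1}x\pscom\Box^{n}x$ this forces $\Box^{n}x\in F$.

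You instead work uniformly in the algebra: you write $a=\bigsqcup_{n}(a\sqcap\Box^{n}0)$ and prove $a\sqcap\Box^{n}0\leq\Box x$ by induction on $n$, using $a\leq\Box a$. The core is the same, since transitivity propagates $\Box(\Box x\pscom x)$ to higher levels and $\Box^{n}0$ bounds the depth. What your version buys:
\begin{itemize}
\item It is choice-free and needs no filters.
\item It isolates the finitary content. The inequality $\Box(\Box x\pscom x)\sqcap\Box^{n}0\leq\Box x$ holds in every modal algebra satisfying \eqref{prtp} and \eqref{prmt} of Definition~\ref{defglalg} and \eqref{transitivity} of Definition~\ref{defgldalg}. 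The infinitary condition \eqref{diamondstar} of Definition~\ref{defgldalg} enters only at the last step.
\item It verifies explicitly that $\sqcap$ distributes over existing countable joins in a Boolean algebra. The paper needs this fact to see that a GLD-algebra satisfies \eqref{ioriand} of Definition~\ref{defgldl}, but only calls it straightforward.
\end{itemize}
The paper's version is economical within the paper, since it reuses Theorem~\ref{gldflttheorem}, which is needed anyway for the embedding. It also mirrors the familiar Kripke-style argument that points of finite depth validate L\"{o}b's axiom.
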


\begin{proof}
Let $A$ be a GLD-algebra. Suppose 
$\Box(\Box x\pscom x)\not\leq \Box x$ for some $x\in A$. 
By Theorem \ref{gldflttheorem} there exists a GLD-filter $F$ 
such that $\Box(\Box x\pscom x)\in F$ and $\Box x\not\in F$. 
Then, there exists the least $n\geq 1$ such that $\Box^{n+1}x\in F$ 
and $\Box^{n}x\not\in F$ since $F$ is a GLD-filter. 
By \eqref{transitivity} of Definition \ref{defgldalg}, 
$\Box^{n}(\Box x\pscom x)\in F$. Hence, 
$$
\Box^{n}(\Box x\pscom x)
\leq
\Box^{n+1}x\pscom \Box^{n}x\in F. 
$$
Then, $\Box^{n}x\in F$, but this is a contradiction. 
\end{proof}

On the other hand, 
Sasaki and Tanaka \cite{ssk-tnk24} present an example of 
a GL-algebra $A$ 
that is not a GLD-algebra. 
Let $F=\langle\omega+1,R\rangle$ be a Kripke frame 
such that $(n,m)\in R$ if and only if $m< n$. 
Then the complex algebra of $F$ is a GL-algebra, but not 
a GLD-algebra.

\begin{theorem}\label{embedding}
Let $A$ be a GLD-lattice.  
If $Z=\langle\gldflt(A),\nsystem\rangle$ 
is a GLD-filter frame,
then $\eta:A\yy\dalgb(Z)|_{\cmdl}$ is a monomorphism of modal distributive lattices. 
%
%
\end{theorem}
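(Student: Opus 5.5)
We need to show four things: $\eta$ is injective, $\eta$ preserves the bounded lattice operations, $\eta(\Box x)=\Box_{Z}\eta(x)$, and $\eta(\Diamond x)=\Diamond_{Z}\eta(x)$, where $\Diamond_{Z}X=\compl{(\Box_{Z}\compl{X})}$ in $\dalgb(Z)$.

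\emph{Injectivity and lattice operations.} If $x\neq y$, then $x\not\leq y$ or $y\not\leq x$. Theorem~\ref{gldflttheorem} then gives a GLD-filter containing one of $x,y$ but not the other, so $\eta(x)\neq\eta(y)$. Every GLD-filter is prime, so $0\notin F$ and $1\in F$ (filters are non-empty and upward closed). Hence $\eta(0)=\emptyset$ and $\eta(1)=\gldflt(A)$. The filter property gives $\eta(x\sqcap y)=\eta(x)\cap\eta(y)$, and primeness with upward closure gives $\eta(x\sqcup y)=\eta(x)\cup\eta(y)$.

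\emph{The box.} We must show that for every $F\in\gldflt(A)$,
\[
\Box x\in F \iff \eta(x)\in\nsystem(F).
\]
The forward direction is exactly the first inclusion in \eqref{etasubset} of Definition~\ref{gldfltfrm}. Conversely, if $\Box x\notin F$, then $\eta(x)\in\{\eta(z)\mid \Box z\notin F\}$, so by \eqref{etaemptyset} we get $\eta(x)\notin\nsystem(F)$.

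\emph{The diamond.} We must show that for every $F$,
\[
\Diamond x\in F \iff \compl{\eta(x)}\notin\nsystem(F).
\]
If $\Diamond x\in F$, then $\compl{\eta(x)}\in\{\compl{\eta(z)}\mid\Diamond z\in F\}$, and \eqref{etaemptyset} says this set is disjoint from $\nsystem(F)$. If $\Diamond x\notin F$, then the second inclusion of \eqref{etasubset} gives $\compl{\eta(x)}\in\nsystem(F)$.

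Putting these together, $\eta$ is an injective homomorphism of modal distributive lattices. It remains to confirm that $\dalgb(Z)|_{\cmdl}$ is itself a modal distributive lattice, i.e.\ that it satisfies $\Box 1=1$, $\Diamond 0=0$ and \eqref{boxdia}. Since $\nsystem(F)$ is a filter of $\power(\gldflt(A))$, it contains $\gldflt(A)$, so $\Box_{Z}$ preserves top and finite meets; $\Diamond_{Z}0=0$ follows by duality. For \eqref{boxdia}, take $w\in\Box_Z(X\cup Y)$ with $w\notin\Diamond_ZY$, so $X\cup Y\in\nsystem(w)$ and $\compl{Y}\in\nsystem(w)$. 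Their intersection $X\cap\compl{Y}$ lies in $\nsystem(w)$, and upward closure gives $X\in\nsystem(w)$, so $w\in\Box_ZX$. The second inequality in \eqref{boxdia} is proved the same way.

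The only genuine obstacle would be if Definition~\ref{gldfltfrm} were too weak to force both directions of the box and diamond equivalences. As the argument shows, conditions \eqref{etasubset} and \eqref{etaemptyset} were chosen to give exactly these two directions, so the proof should be routine. The real difficulty, the existence of a GLD-filter frame making $\dalgb(Z)$ a GL-algebra, lies outside this theorem.
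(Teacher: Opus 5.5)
Your proof is correct and follows the same route as the paper: injectivity comes from Theorem~\ref{gldflttheorem}, and the $\Box$ and $\Diamond$ clauses follow directly from conditions \eqref{etasubset} and \eqref{etaemptyset} of Definition~\ref{gldfltfrm}. You also check that $\dalgb(Z)|_{\cmdl}$ really is a modal distributive lattice, using that each $\nsystem(F)$ is a filter; the paper leaves this implicit, and your argument for it is sound.
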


\begin{proof}
It is straightforward to show that $\eta$ is a homomorphism of distributive 
lattices. 
By Theorem \ref{gldflttheorem}, $\eta$ is injective. 
Let $x\in A$. Then, for every $F\in\gldflt(A)$, we have 
\begin{align*}
F\in\eta(\Box x)
&\eq
\Box x\in F\\
&\eq
\eta(x)\in\nsystem(F)
& (\text{Definition \ref{gldfltfrm}})\\
&\eq
F\in\Box_{Z}\eta(x)
& (\text{Definition \ref{neighborhoodframes}})
\end{align*}
and 
\begin{align*}
F\in\eta(\Diamond x)
&\eq
\Diamond x\in F\\
&\eq
\compl{\eta(x)}\not\in\nsystem(F)
& (\text{Definition \ref{gldfltfrm}})\\
&\eq
F\not\in\Box_{Z}\compl{\eta(x)}
& (\text{Definition \ref{neighborhoodframes}})\\
&\eq
F\in-\Box_{Z}-\eta(x)
& (\text{Definition \ref{neighborhoodframes}})\\
&\eq
F\in\Diamond_{Z}\eta(x). 
\end{align*}
Hence, $\eta(\Box x)=\Box_{Z}\eta(x)$ and 
$\eta(\Diamond x)=\Diamond_{Z}\eta(x)$ 
for every $x\in A$. 
%
%
\end{proof}

\begin{theorem}\label{cgl-cgld}
Suppose that $A$ is a GLD-lattice. 
For each $F\in\gldflt(A)$,  
let
\begin{equation}\label{canonical}
\nsystem(F)
=
\ua\left\{
\eta(x)\cap\compl{\eta(y)}\mid 
\text{$\Box x\in F$ and $\Diamond y\not\in F$}
\right\}. 
\end{equation}
Then, 
\begin{enumerate}
\item
$Z=\langle \gldflt(A),\nsystem\rangle$ 
is a GLD-filter frame of $A$; 
\item
$\dalgb(Z)$ is a GL-algebra. 
\end{enumerate}
%
%
\end{theorem}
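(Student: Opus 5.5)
The plan is to verify the two claims in order. For (2), the idea is to show that $\dalgb(Z)$ is a GLD-algebra and then invoke Lemma~\ref{sasaki-tanaka}, rather than checking the L\"{o}b inequality directly. The tool used throughout is order reflection of $\eta$. Since $\eta$ preserves $\sqcup$ and $\sqcap$ (GLD-filters are prime filters), Theorem~\ref{gldflttheorem} gives
$$
\eta(a)\subseteq\eta(b)\ \Longrightarrow\ a\leq b .
$$

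\emph{Claim (1).} First I check that $\nsystem(F)$ is a filter of $\power(\gldflt(A))$. It is upward closed by definition. It is non-empty, since $\Box 1=1\in F$ and $\Diamond 0=0\notin F$, so $\gldflt(A)=\eta(1)\cap\compl{\eta(0)}\in\nsystem(F)$. The generating family is closed under intersection, because
$$
\eta(x_1)\cap\compl{\eta(y_1)}\cap\eta(x_2)\cap\compl{\eta(y_2)}=\eta(x_1\sqcap x_2)\cap\compl{\eta(y_1\sqcup y_2)}.
$$
Here $\Box(x_1\sqcap x_2)=\Box x_1\sqcap\Box x_2\in F$. Also $\Diamond(y_1\sqcup y_2)=\Diamond y_1\sqcup\Diamond y_2\notin F$ by primeness. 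Both use \eqref{distributivity} of Definition~\ref{defgldl}.

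Condition \eqref{etasubset} of Definition~\ref{gldfltfrm} follows by taking $y=0$, respectively $x=1$, in \eqref{canonical}.

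Condition \eqref{etaemptyset} is the main step. Suppose $\eta(z)\in\nsystem(F)$. Then $\eta(x)\cap\compl{\eta(y)}\subseteq\eta(z)$ for some $x,y$ with $\Box x\in F$ and $\Diamond y\notin F$. Hence $\eta(x)\subseteq\eta(z\sqcup y)$, and order reflection gives $x\leq z\sqcup y$. By \eqref{boxdia} and monotonicity,
$$
\Box x\leq\Box(z\sqcup y)\leq\Box z\sqcup\Diamond y .
$$
Since $F$ is prime and $\Diamond y\notin F$, we get $\Box z\in F$.

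Dually, suppose $\compl{\eta(z)}\in\nsystem(F)$. Then $\eta(x)\cap\compl{\eta(y)}\subseteq\compl{\eta(z)}$, so $\eta(x\sqcap z)\subseteq\eta(y)$ and hence $x\sqcap z\leq y$. By \eqref{boxdia},
$$
\Box x\sqcap\Diamond z\leq\Diamond(x\sqcap z)\leq\Diamond y\notin F .
$$
Since $\Box x\in F$, this forces $\Diamond z\notin F$. Together these give both emptiness conditions.

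\emph{Claim (2).} Since $\nsystem(F)$ is a filter, $\Box_Z$ satisfies \eqref{prtp} and \eqref{prmt} of Definition~\ref{defglalg}.

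For transitivity, let $X\in\nsystem(F)$, witnessed by $\eta(x)\cap\compl{\eta(y)}\subseteq X$ with $\Box x\in F$ and $\Diamond y\notin F$. Every $G\in\eta(\Box x)\cap\compl{\eta(\Diamond y)}$ has $X\in\nsystem(G)$, so
$$
\eta(\Box x)\cap\compl{\eta(\Diamond y)}\subseteq\Box_Z X .
$$
By \eqref{gldltransitivity} of Definition~\ref{defgldl}, $\Box\Box x\in F$ and $\Diamond\Diamond y\leq\Diamond y$, so $\Diamond\Diamond y\notin F$. Hence $\Box_Z X\in\nsystem(F)$, which gives $\Box_Z X\subseteq\Box_Z\Box_Z X$.

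For \eqref{diamondstar} of Definition~\ref{defgldalg}, part (1) together with Theorem~\ref{embedding} makes $\eta$ commute with $\Box$. Thus $\Box_Z^n\emptyset=\eta(\Box^n 0)$. Every GLD-filter contains some $\Box^n 0$ by definition, so $\bigcup_n\Box_Z^n\emptyset=\gldflt(A)$.

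So $\dalgb(Z)$ is a GLD-algebra, and Lemma~\ref{sasaki-tanaka} shows that it is a GL-algebra. Lemma~\ref{sasaki-tanaka} applies because every GLD-algebra is a GLD-lattice, so Theorem~\ref{gldflttheorem}, which that lemma's proof uses, is available.

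The step I expect to require the most care is \eqref{etaemptyset} in claim (1). It is the only place where the interaction axioms \eqref{boxdia} and order reflection via Theorem~\ref{gldflttheorem} are genuinely needed.
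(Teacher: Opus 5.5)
Your proof is correct and follows essentially the same route as the paper. You check the filter property through closure of the generators under intersection, derive condition \eqref{etaemptyset} from order reflection (Theorem~\ref{gldflttheorem}) together with the interaction axioms \eqref{boxdia}, and prove part (2) by verifying the GLD-algebra conditions and invoking Lemma~\ref{sasaki-tanaka}.

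The differences are cosmetic:
\begin{itemize}
\item You argue \eqref{etaemptyset} directly rather than by contradiction, and correctly write $\Box x\sqcup\Diamond y$ where the paper's display has a $\sqcap$ typo.
\item You check non-emptiness of $\nsystem(F)$ explicitly.
\item Your transitivity step reads $\eta(\Box x)\cap\compl{\eta(\Diamond y)}\subseteq\Box_Z X$ straight off the definition of $\nsystem(G)$, instead of going through Theorem~\ref{embedding} and monotonicity of $\Box_Z$.
\end{itemize}
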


\begin{proof}
(1): 
%
%
First, we show that 
$\nsystem(F)$ is a filter of $\power(\gldflt(A))$ for 
every GLD-filter $F$. 
Suppose that $X_{1}$ and $X_{2}$ are in $\nsystem(F)$. 
Then, there exist $\Box x_{1}$ and $\Box x_{2}$ in $F$ 
and $\Diamond y_{1}$ and $\Diamond y_{2}$ in $\compl{F}$ 
such that  
\begin{equation*}
\eta(x_{1})\cap\compl{\eta(y_{1})}\subseteq X_{1},\ 
\eta(x_{2})\cap\compl{\eta(y_{2})}\subseteq X_{2}. 
\end{equation*}
Then, 
\begin{align*}
X_{1}\cap X_{2}
&\supseteq
\eta(x_{1})\cap\compl{\eta(y_{1})}\cap
\eta(x_{2})\cap\compl{\eta(y_{2})}\\
&=
\eta(x_{1}\sqcap x_{2})\cap
\compl{\eta(y_{1}\sqcup y_{2})}. 
\end{align*}
Since 
$\Box x_{1}\sqcap\Box x_{2}=\Box(x_{1}\sqcap x_{2})\in F$ 
and 
$\Diamond y_{1}\sqcup\Diamond y_{2}=\Diamond(y_{1}\sqcup y_{2})\not\in F$,
we have $X_{1}\cap X_{2}\in\nsystem(F)$. 
Hence, $\nsystem(F)$ is a filter of $\power(\gldflt(A))$ for every $F\in\gldflt(A)$. 
It is straightforward that \eqref{etasubset} of Definition \ref{gldfltfrm} 
follows from \eqref{canonical}. 
We show \eqref{etaemptyset} of Definition \ref{gldfltfrm}. 
First, suppose that there exists 
$X\in \left\{
\eta(x)\mid \Box x\not\in F
\right\}\cap\nsystem(F)
$. 
Then, there exists $\Box x\not\in F$, $\Box u\in F$, and $\Diamond v\not\in F$ 
such that $X=\eta(x)$ and 
$$
\eta(u)\cap\compl{\eta(v)}\subseteq\eta(x). 
$$
Hence, 
\begin{equation}\label{empty1}
\eta(u)\subseteq \eta(x)\cup\eta(v)=\eta(x\sqcup v). 
\end{equation}
We claim that $u\not\leq x\sqcup v$. If not, it follows from 
\eqref{boxdia} of Definition \ref{defmodaldl} that 
$$
\Box u\leq\Box(x\sqcup v)\leq \Box x\sqcap\Diamond v. 
$$
This contradicts $\Box u\in F$, since 
$\Box x\not\in F$ and $\Diamond v\not\in F$.   
This completes the proof of the claim. 
By Theorem \ref{gldflttheorem}, 
there exists a GLD-filter such that 
$u\in F$ and $x\sqcup v\not\in F$. 
This contradicts \eqref{empty1}. 
Next, suppose that there exists 
$X\in \left\{
\compl{\eta(x)}\mid \Diamond x\in F
\right\}\cap\nsystem(F)
$. 
Then, there exists $\Diamond x\in F$, $\Box u\in F$, and $\Diamond v\not\in F$ 
such that $X=\compl{\eta(x)}$ and 
$$
\eta(u)\cap\compl{\eta(v)}\subseteq\compl{\eta(x)}. 
$$
Hence, 
\begin{equation}\label{empty2}
\eta(x\sqcap u)
= 
\eta(x)\cap\eta(u)
\subseteq\eta(v)
\end{equation}
We claim that $x\sqcap u\not\leq v$. If not, it follows from 
\eqref{boxdia} of Definition \ref {defmodaldl} that 
$$
\Diamond x\sqcap\Box u
\leq
\Diamond(x\sqcap u)
\leq\Diamond v. 
$$
This contradicts $\Diamond v\not\in F$, since 
$\Diamond x\in F$ and $\Box u\in F$.   
This completes the proof of the claim. 
By Theorem \ref{gldflttheorem}, 
there exists a GLD-filter such that 
$x\sqcap u\in F$ and $v\not\in F$. 
This contradicts \eqref{empty2}.

(2)
%
%
By Lemma \ref{sasaki-tanaka}, 
it is enough to show that $\dalgb(Z)$ is a GLD-algebra. 
First, we show \eqref{prtp} of Definition \ref{defglalg}. 
Since $\nsystem(F)$ is non-empty and upward-closed, 
$\gldflt(A)\in\nsystem(F)$. 
Hence, 
$
\Box_{Z}\gldflt(A)=\gldflt(A). 
$
Second, we show 
\eqref{prmt} of Definition \ref{defglalg}. 
Let $X$ and $Y$ be in $\power(\gldflt(A))$. For
every $F\in\gldflt(A)$, 
\begin{align*}
F\in \Box_{Z}(X\cap Y)
&\eq
X\cap Y\in\nsystem(F)\\
&\eq
\text{$X\in\nsystem(F)$ and $Y\in\nsystem(F)$}\\
&\eq
\text{$F\in \Box_{Z}X$ and $F\in \Box_{Z}Y$}\\
&\eq
F\in \Box_{Z}X\cap\Box_{Z}Y,  
\end{align*}
where the second equivalence follows since $\nsystem(F)$ is a filter. 
Thirdly, we show \eqref{transitivity} of Definition \ref{defgldalg}. 
Let $X\in\power(\gldflt(A))$. For
every $F\in\gldflt(A)$, 
\begin{align*}
F\in\Box_{Z}X
&\eq
X\in\nsystem(F)\\
&\eq
\exists\Box x\in F\exists\Diamond y\not\in F
\left(
\eta(x)\cap\compl{\eta(y)}\subseteq X
\right)\\
&\thn
\exists\Box\Box x\in F\exists\Diamond\Diamond y\not\in F
\left(
\eta(x)\cap\compl{\eta(y)}\subseteq X
\right)\\
&\thn
\exists\Box\Box x\in F\exists\Diamond\Diamond y\not\in F
\left(
\Box_{Z}
\left(
\eta(x)\cap\compl{\eta(y)}
\right)
\subseteq 
\Box_{Z}X
\right)\\
&\eq
\exists\Box\Box x\in F\exists\Diamond\Diamond y\not\in F
\left(
\eta(\Box x)\cap\compl{\eta(\Diamond y)}
\subseteq 
\Box_{Z}X
\right).  
\end{align*}
Here, the third implication holds by 
\eqref{gldltransitivity} of Definition \ref{defgldl}, 
the fourth implication holds since $\Box_{Z}$ is monotonic, 
which is because $\dalgb(Z)$ satisfies 
\eqref{prmt} of Definition \ref{defglalg},  
and the last equivalence holds from the following: 
\begin{align*}
\Box_{Z}
\left(
\eta(x)\cap\compl{\eta(y)}
\right)
&=
\Box_{Z}
\left(
\eta(x)\cap-\eta(y)
\right)
& (\text{Definition \ref{neighborhoodframes}})\\
&=
\Box_{Z}\eta(x)\cap\Box_{Z}-\eta(y)
&\text{(monotonicity)}
\\
&=
\Box_{Z}\eta(x)\cap\compl{\left(\Diamond_{Z}\eta(y)\right)}
& (\text{Definition \ref{neighborhoodframes}})\\
&=
\eta(\Box x)\cap\compl{\eta(\Diamond y)}
&\text{(Theorem \ref{embedding})}. 
\end{align*}
Lastly, we show \eqref{diamondstar} of Definition \ref{defgldalg}. 
Suppose, $F\in\gldflt(A)$. Then, 
there exists $n\in\omega$ such that $\Box^{n}0\in F$. 
By Theorem \ref{embedding}, 
$F\in\eta(\Box^{n}0)={\Box_{Z}}^{n}\emptyset$. 
This implies that $F\in\bigcup_{n\in\omega}{\Box_{Z}}^{n}\emptyset$. 
%
%
\end{proof}

\begin{corollary}\label{cgldembcgl}
For each GLD-lattice $A$, there is a GL-algebra $B$ and 
a monomorphism $\eta:A\yy B|_{\cmdl}$ of modal distributive lattices. 
\end{corollary}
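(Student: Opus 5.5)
The corollary follows by combining Theorem \ref{cgl-cgld} with Theorem \ref{embedding}. Given a GLD-lattice $A$, define $\nsystem$ on $\gldflt(A)$ by the canonical assignment \eqref{canonical}. Put $Z=\langle\gldflt(A),\nsystem\rangle$ and take $B=\dalgb(Z)$.

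The first step is to check that $Z$ is a genuine neighborhood frame, which requires $W=\gldflt(A)$ to be non-empty. If $A$ is nontrivial, i.e.\ $0\neq 1$, then $1\not\leq 0$. Theorem \ref{gldflttheorem} then yields a GLD-filter containing $1$, so $\gldflt(A)\neq\emptyset$.

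If $A$ is the one-element lattice, handle it separately. Choose $B$ to be any GL-algebra whose underlying set is $\{0,1\}$ with $\Box=\mathrm{id}$ is not GL, so use instead the two-element algebra with $\Box x=1$ for all $x$; this is the algebra of the one-point irreflexive frame and is a GL-algebra. The required map then cannot be injective unless we regard the trivial case as degenerate. So, more cleanly, I would take $B$ to be the trivial one-element Boolean algebra, which is trivially a GL-algebra, together with the identity map. This is harmless if one allows the empty frame, since then $\dalgb(Z)$ is the trivial algebra on $\power(\emptyset)$.

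For nontrivial $A$, Theorem \ref{cgl-cgld}(1) shows that $Z$ is a GLD-filter frame of $A$, and Theorem \ref{cgl-cgld}(2) shows that $\dalgb(Z)$ is a GL-algebra. Theorem \ref{embedding}, applied to this GLD-filter frame, then shows that $\eta:A\yy\dalgb(Z)|_{\cmdl}$ is a monomorphism of modal distributive lattices, which is exactly the claim.

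The only real obstacle is the bookkeeping around non-emptiness of $W$ required by Definition \ref{neighborhoodframes}. Everything else is a direct citation of earlier results.
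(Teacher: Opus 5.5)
Your proposal is correct and is the paper's argument: define $\nsystem$ by \eqref{canonical}, use Theorem \ref{cgl-cgld} to get that $Z$ is a GLD-filter frame with $\dalgb(Z)$ a GL-algebra, and use Theorem \ref{embedding} to get the monomorphism $\eta$. Your extra treatment of the one-element lattice is a legitimate refinement that the paper skips, since there $\gldflt(A)=\emptyset$ violates the non-emptiness in Definition \ref{neighborhoodframes}; note, though, that the two-element algebra fails in that case not because of injectivity but because $0_A=1_A$ cannot be sent to both $0$ and $1$, and the trivial GL-algebra with the identity map is the right fix.
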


\begin{proof}
Let $A\in\cgld$. 
Define $\nsystem:\gldflt(A)\yy\power(\power(\gldflt(A)))$ by 
\eqref{canonical}. 
Then, $\dalgb(Z)$  is a GL-algebra by Theorem \ref{cgl-cgld}, 
and $\eta: A\yy\dalgb(Z)$ is a monomorphism of modal distributive lattices 
by 
Theorem \ref{embedding}. 
\end{proof}

\section{Equational logic for modal distributive lattices
}\label{sec:eqlog}

In this section, we introduce an equational logic for 
modal distributive lattices. 
This is a fragment of first-order logic with 
constant symbols $\lanzero$ and $\lanone$, 
interpreted as $0$ and $1$, respectively; 
unary function symbols $\lanbox$ and $\landiamond$, 
interpreted as $\Box$ and $\Diamond$, respectively; 
binary function symbols $\lansqcup$ and $\lansqcap$, 
interpreted as $\sqcup$ and $\sqcap$, respectively; 
and a single predicate symbol $\laneq$ that is interpreted as equality ($=$). 
Most of the terminology introduced here
is standard for algebras of arbitrary type, 
though our focus is on modal distributive lattices.

\begin{definition}
We fix a countable set $X$ of variables. 
The set $T$ of {\em terms} of modal distributive lattices is the 
least set such that 
\begin{enumerate}
\item
$X\subseteq T$; 
\item
$\lanzero\in T$ and $\lanone\in T$; 
\item
if $s\in T$ then $\lanbox(s)$ and $\landiamond(s)$ are in $T$; 
\item
if $s$ and $t$ are in $T$ then $\lansqcup(s,t)$ and $\lansqcap(s,t)$ are in $T$. 
\end{enumerate}
An {\em identity} of modal distributive lattices 
is a first-order formula of the form $s\laneq t$, where $s$ and $t$ 
are terms of modal distributive lattices.  
A {\em quasi-identity} of modal distributive lattices 
is a first-order formula of the form 
$s_{1}\laneq t_{1}\land\cdots\land s_{n}\laneq t_{n}\supset s\laneq t$, 
where $s_{1}\laneq t_{1},\dots,s_{n}\laneq t_{n}$ and $s\laneq t$ 
are identities of modal distributive lattices.  
It is clear that an identity is a quasi-identity. 
\end{definition}

Let $s$ be a term. 
We write $s$ as $s(x_{1},\ldots,x_{n})$ if  
the set of all variables that occur in $s$ is among $x_{1},\ldots,x_{n}$. 
For each term $s(x_{1},\ldots,x_{n})$ of modal distributive lattices, 
each modal distributive lattice $A$, and each $a_{1},\ldots,a_{n}$ in $A$, 
we define 
$s^{A}(a_{1},\ldots,a_{n})\in A$ in the usual way.

\begin{definition}
Let $A$ be a 
modal distributive lattice and 
$s\laneq t$ be an identity of modal distributive lattices. 
We write $A\vl s\laneq t$
if 
$s^{A}(a_{1},\ldots,a_{n})=t^{A}(a_{1},\ldots,a_{n})$
holds
for all $a_{1},\ldots,a_{n}$ in $A$. 
Let 
$s_{1}\laneq t_{1}\land\cdots\land s_{m}\laneq t_{m}\supset s\laneq t$
be a quasi-identity of modal distributive lattices. 
We write $A\vl s_{1}\laneq t_{1}\land\cdots\land s_{m}\laneq t_{m}\supset s\laneq t$  
if $A$ satisfies the following: 
for every $a_{1},\ldots, a_{n}$ in $A$, if 
$s^{A}_{i}(a_{1},\ldots,a_{n})=t^{A}_{i}(a_{1},\ldots,a_{n})$ 
for every $i=1,\ldots, m$, 
then 
$s^{A}(a_{1},\ldots,a_{n})=t^{A}(a_{1},\ldots,a_{n})$.  
Let $S$ be a set of quasi-identities of modal distributive 
lattices. 
We write $A\vl S$ if 
$A\vl\alpha$ holds for every $\alpha\in S$. 
Let $C$ be a class of modal distributive lattices. 
We write $C\vl S$ if $A\vl S$ for every $A\in C$.  
\end{definition}

\begin{definition}
Let $C$ be a class of modal distributive lattices. 
We write 
$\cliso(C)$, 
$\clhomo(C)$, 
$\clsub(C)$, 
$\clprod(C)$, 
and
$\clprodr(C)$
for the least class of modal distributive lattices
containing $C$ as a subset and closed under 
isomorphic images, 
homomorphic images, subalgebras, 
products, and reduced products, respectively. 
We also write $\qiop(C)$ for the set of all 
quasi-identities $\alpha$ such that $C\vl\alpha$. 
For each set $S$ of quasi-identities of modal distributive 
lattices, we write $\modelop(S)$ for the class of all distributive 
lattices $A$ such that $A\vl S$. 
\end{definition}

\begin{definition}
Let $C$ be a class of modal distributive lattices. 
$C$ is said to be a {\em variety} of 
modal distributive lattices, if there exists 
a set $S$ of identities of modal distributive lattices 
such that 
$C=\modelop(S)$. 
If there exists 
a set $S$ of quasi-identities of modal distributive lattices 
such that 
$C=\modelop(S)$, 
then $C$ is said to be a {\em quasi-variety}.  
We write $\clqv(C)$ for the least quasi-variety 
containing $C$ as a subset.  
\end{definition}

The following two theorems 
hold for any type of algebra. 

\begin{theorem}\label{birkhoffth}
(Birkhoff \cite{brk35}). 
Let $C$ be a class of algebras. Then, 
$C$ is a variety if and only if it is closed under 
homomorphic images, subalgebras, and products. 
\end{theorem}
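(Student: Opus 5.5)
The easy direction is soundness. Suppose $C=\modelop(S)$ for a set $S$ of identities. For each operation $H$, $S$ or $P$, I would show that every identity $s\laneq t$ in $S$ is preserved, and this comes down to how terms are evaluated.

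For a homomorphism $h:A\yy B$ onto $B$, one checks by induction on terms that $h(s^{A}(a_{1},\ldots,a_{n}))=s^{B}(h(a_{1}),\ldots,h(a_{n}))$. Since $h$ is surjective, every tuple in $B$ has the form $(h(a_{1}),\ldots,h(a_{n}))$, so $B\vl s\laneq t$.

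For a subalgebra $B\subseteq A$, term operations of $B$ are restrictions of those of $A$, so identities transfer from $A$ to $B$.

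For a product $\prod_{i}A_{i}$, term operations are computed coordinatewise. An identity holding in every factor therefore holds in the product.

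The substantial direction is the converse. Assume $C$ is closed under $H$, $S$ and $P$, and let $S$ be the set of all identities true in $C$. Clearly $C\subseteq\modelop(S)$, so it remains to show $\modelop(S)\subseteq C$. I would do this with the free algebra.

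Fix a set $Y$ of variables, of arbitrary cardinality; when the paper fixes $X$ countable, I would enlarge it as needed. On the term algebra $T(Y)$, let $\theta$ be the intersection of all congruences $\ker h$, where $h:T(Y)\yy A$ is a homomorphism and $A\in C$. Then $\theta$ is a congruence.

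The main construction is to show that $F_{C}(Y)=T(Y)/\theta$ belongs to $C$. Here is the size issue I expect to be the principal obstacle. There are only set-many congruences $\ker h$. For each one, pick a single witness $h_{\kappa}:T(Y)\yy A_{\kappa}$ with $A_{\kappa}\in C$. The induced map $T(Y)\yy\prod_{\kappa}A_{\kappa}$ has kernel $\theta$. Hence $F_{C}(Y)$ is isomorphic to a subalgebra of a product of members of $C$, so it lies in $SP(C)\subseteq C$.

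Now take any $B\in\modelop(S)$ and let $Y$ have cardinality at least $|B|$. Choose a surjection $g:Y\yy B$; it extends to a surjective homomorphism $\bar g:T(Y)\yy B$.

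I claim $\theta\subseteq\ker\bar g$. Suppose $(s,t)\in\theta$. Then for every assignment into any $A\in C$ we get $s^{A}=t^{A}$, because each assignment gives a homomorphism $T(Y)\yy A$. So $s\laneq t$ is true in $C$, i.e. it belongs to $S$ (renaming variables if needed). Hence $B\vl s\laneq t$, and so $\bar g(s)=\bar g(t)$.

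It follows that $\bar g$ factors through $F_{C}(Y)$, making $B$ a homomorphic image of $F_{C}(Y)\in C$. Therefore $B\in H(C)=C$.

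Finally, there is one point the paper's notation would make me double-check: that $\modelop$ is read over modal distributive lattices, i.e. algebras of the given type. The argument uses only the signature, so it is valid for algebras of any type, as the paper remarks.
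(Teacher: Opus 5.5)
The paper gives no proof of this statement. It is quoted from Birkhoff \cite{brk35} as a standard fact of universal algebra, so there is no in-paper argument to compare yours with.

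Your proof is the standard free-algebra proof, and it is correct. For the easy half, identities are preserved under $\clhomo$, $\clsub$ and $\clprod$ by induction on terms. For the converse, you use the relatively free algebra $T(Y)/\theta$. It embeds into a product of members of $C$, because the kernel of the product map is $\bigcap_{\kappa}\kappa=\theta$. Every $B$ satisfying the identities of $C$ is then a quotient of it once $|Y|\geq|B|$. The step $\theta\subseteq\ker\bar{g}$ is exactly where the identities of $C$ are used. Your remark that each pair $(s,t)$ involves only finitely many variables, and so can be renamed into the paper's countable set $X$, correctly handles the mismatch with the fixed variable set.

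The paper only uses the easy half. In Theorem~\ref{qiequivalence} it needs closure of $\cglm$ under reduced products. That follows from your first part plus the observation that a reduced product is the quotient of a product by a congruence, so it lies in $\clhomo\clprod$.

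Three small tidy-ups:
\begin{enumerate}
\item Choosing one witness $h_{\kappa}$ per congruence is a choice from proper classes. You can avoid it by using $T(Y)/\kappa$ itself. It lies in $C$ because it is isomorphic to the subalgebra $h[T(Y)]$ of some $A\in C$.
\item Both these quotients and $T(Y)/\theta$ land in $\cliso\clsub\clprod(C)$ rather than $\clsub\clprod(C)$. This is harmless, since isomorphic images are homomorphic images.
\item As usual, $\clprod$ must allow the empty product (the trivial algebra). Otherwise $C=\emptyset$ would be closed under $\clhomo$, $\clsub$, $\clprod$ without being a variety. Your construction relies on this convention precisely when the family of kernels is empty.
\end{enumerate}
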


\begin{theorem}\label{malcevth}
(Mal'cev \cite{mlc66}). 

Let $C$ be a class of algebras. Then, 
$\clqv(C)=\cliso\clsub\clprodr(C)$
\end{theorem}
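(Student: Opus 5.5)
The plan is to prove both inclusions, with the inclusion $\cliso\clsub\clprodr(C)\subseteq\clqv(C)$ being the easy one. First I will check that every quasi-variety $\modelop(S)$ is closed under isomorphic images, subalgebras and reduced products.

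\begin{itemize}
\item Isomorphic images and subalgebras preserve quasi-identities. A quasi-identity is a universal Horn sentence whose atoms are equations, and equations are evaluated the same way in a subalgebra as in the ambient algebra.
\item For reduced products $\prod_{i\in I}A_i/\mathcal{F}$, with each $A_i\vl S$ and $\mathcal{F}$ a filter on $I$, I will argue coordinatewise. Suppose $s_j^{A}(\bar a)=t_j^{A}(\bar a)$ holds in the reduced product for $j=1,\dots,m$. Then each set $E_j=\{i\mid s_j^{A_i}(\bar a(i))=t_j^{A_i}(\bar a(i))\}$ lies in $\mathcal{F}$, and so does $\bigcap_j E_j$. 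On $\bigcap_j E_j$ the conclusion $s=t$ holds coordinatewise because $A_i\vl S$. Hence the set where $s=t$ holds is a superset of a member of $\mathcal{F}$, so it lies in $\mathcal{F}$.
\end{itemize}

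Hence $C\subseteq\clqv(C)$ implies $\cliso\clsub\clprodr(C)\subseteq\clqv(C)$. The product case $\mathcal{F}=\{I\}$ is included, and so is the trivial algebra as the empty product.

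For the converse, let $K=\cliso\clsub\clprodr(C)$. It suffices to show that $K=\modelop(\qiop(C))$, since the right-hand side is a quasi-variety containing $C$. The inclusion $K\subseteq\modelop(\qiop(C))$ follows from the first step. Now take $B\vl\qiop(C)$; I want to show $B\in K$. I will present $B$ by its positive diagram. Introduce a constant (or variable) $x_b$ for each $b\in B$, and let $D$ be the set of all equations $s(\bar x)\laneq t(\bar x)$ true in $B$ under $x_b\mapsto b$. Let $N$ be the set of all such equations false in $B$.

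The key claim is this: for each $e\in N$, there are $A_e\in C$ and an assignment in $A_e$ satisfying every equation of $D$ while falsifying $e$. Suppose not. Then every $A\in C$ satisfies $D\supset e$ under all assignments. This is an infinitary implication, and I need compactness to reduce it to a finite subset $D_0\subseteq D$ with $C\vl\bigwedge D_0\supset e$. That gives a quasi-identity in $\qiop(C)$ which $B$ fails, a contradiction.

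The compactness step is the main obstacle. I would obtain it via ultraproducts, using $\clprodr$ to absorb them. Suppose that for every finite $D_0\subseteq D$ there is $A_{D_0}\in C$ with an assignment satisfying $D_0$ and failing $e$. Take an ultrafilter $\mathcal{U}$ on the finite subsets of $D$ containing all the cones $\{D_0\mid D_0\supseteq D_1\}$. The ultraproduct $\prod A_{D_0}/\mathcal{U}$ then satisfies all of $D$ and fails $e$, by \L{}o\'s's theorem restricted to atomic formulas and their negations, which can be checked directly. This ultraproduct is a reduced product of members of $C$.

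To organise the argument properly, I will replace the claim by: for each $e\in N$ there is $A'_e\in\clprodr(C)$ with an assignment satisfying $D$ and failing $e$, produced by the ultraproduct argument above. Then I form $P=\prod_{e\in N}A'_e$, which again lies in $\clprodr(C)$, since a product of reduced products is a reduced product over the disjoint union of the index sets. The assignments combine to a map $h\colon B\to P$, $b\mapsto(\text{value of }x_b)_e$.

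\begin{itemize}
\item $h$ is a homomorphism, because $D$ contains all the operation-table equations such as $x_{b\sqcup c}\laneq\lansqcup(x_b,x_c)$, $x_{\Box b}\laneq\lanbox(x_b)$, and $x_0\laneq\lanzero$.
\item $h$ is injective, because for $b\neq c$ the equation $x_b\laneq x_c$ lies in $N$ and is separated at its own coordinate.
\end{itemize}

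Thus $B\cong h(B)\in\clsub(P)$, so $B\in\cliso\clsub\clprodr(C)$. If $N=\emptyset$, then $B$ is trivial and is the empty product.
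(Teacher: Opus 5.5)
Your proof is correct. The paper gives no proof of this statement to compare against: it quotes Mal'cev \cite{mlc66} and uses the result as a black box in Theorem~\ref{qiequivalence}. What you give is the standard textbook argument. First, quasi-varieties are closed under $\cliso$, $\clsub$ and $\clprodr$. Then $B\vl\qiop(C)$ has its positive diagram realised in reduced products of members of $C$, and these are combined in a product that separates the points of $B$. Citing keeps the paper short, while your version makes its completeness argument self-contained. Your reduced-product computation and the homomorphism and injectivity checks for $h$ are fine.

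A few points to tidy up:
\begin{enumerate}
\item Delete the first version of your key claim, the one with $A_e\in C$ and the appeal to compactness. It is false in general, because compactness fails for a class not closed under ultraproducts. For example, take $C$ to be the finite cyclic groups and $B=\mathbb{Q}$. Then $\mathbb{Q}$ embeds in an ultraproduct of the $\mathbb{Z}_p$, so $B\vl\qiop(C)$. But $\mathbb{Q}$ has no nontrivial homomorphism into a finite group, so no member of $C$ realises $D$ while separating two elements. Only your corrected claim, with $A'_e\in\clprodr(C)$, is valid. Its hypothesis is exactly that each $\bigwedge D_0\supset e$ lies outside $\qiop(C)$, since $B$ refutes it; say this explicitly.
\item You do not need an ultrafilter. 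Let $\mathcal{F}$ be the filter on the finite subsets of $D$ generated by the cones; it is proper. Each $d\in D$ holds on a set containing the cone of $\{d\}$, and $e$ holds on the empty set. So the reduced product modulo $\mathcal{F}$ already satisfies $D$ and fails $e$. This is precisely why the theorem can be stated with $\clprodr$. Your ultrafilter version proves the sharper $\clqv(C)=\cliso\clsub\clprod\clprod_{\mathit{U}}(C)$.
\item The paper fixes a countable set $X$ of variables, so the $x_b$ should be treated as new constants or renamed into $X$. This is harmless, because each quasi-identity you actually invoke mentions only finitely many of them.
\item For the product of reduced products, take $I=\bigsqcup_e I_e$ and the filter $\{Y\subseteq I\mid Y\cap I_e\in\mathcal{F}_e \text{ for all } e\}$ on it. This gives $P$ only up to isomorphism, which the outer $\cliso$ absorbs. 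Under the paper's closure-class definition of $\clprodr$, you even get $P\in\clprodr(C)$ directly, since a product is a reduced product with the filter $\{N\}$.
\end{enumerate}
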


Now, we present our main theorem of this section.

\begin{theorem}\label{qiequivalence}
For any  quasi-identity $\alpha$ of modal distributive lattices,  
$\cglm|_{\cgld } \vl \alpha$ if and only if $\cgld\vl \alpha$. 
\end{theorem}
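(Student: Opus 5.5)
The plan is to compare the two classes through finite algebras and embeddings. I read $\cglm|_{\cgld}$ as $\cglm|_{\cmdl}$, the class of modal distributive lattice reducts of GL-algebras. Quasi-identities are preserved under isomorphic copies and subalgebras. So each direction reduces to showing that a failure of $\alpha$ in one class produces a failure in the other.

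\emph{From $\cglm|_{\cmdl}\vl\alpha$ to $\cgld\vl\alpha$.}
\begin{enumerate}
\item Let $A\in\cgld$. By Corollary \ref{cgldembcgl}, there are a GL-algebra $B$ and a monomorphism $\eta:A\yy B|_{\cmdl}$ of modal distributive lattices.
\item So $A$ is isomorphic to a subalgebra of $B|_{\cmdl}$, that is, $A\in\cliso\clsub(\cglm|_{\cmdl})$.
\item Hence every quasi-identity valid in $\cglm|_{\cmdl}$ is valid in $A$. This is also the easy half of the equality $\clqv(\cgld)=\clqv(\cglm|_{\cmdl})$ via Theorem \ref{malcevth}.
\end{enumerate}

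\emph{From $\cgld\vl\alpha$ to $\cglm|_{\cmdl}\vl\alpha$.} Note that not every GL-algebra reduct is a GLD-lattice: the complex algebra of $\langle\omega+1,R\rangle$ violates \eqref{ioriand} of Definition \ref{defgldl}, because joins there are unions. So I argue through finite GL-algebras.
\begin{enumerate}
\item Suppose $\alpha$ is $s_1\laneq t_1\land\cdots\land s_m\laneq t_m\supset s\laneq t$ and fails in some $B|_{\cmdl}$ with $B\in\cglm$.
\item Translate the terms into modal formulas $\hat s_i,\hat t_i,\hat s,\hat t$, with $\Diamond$ read as $\neg\Box\neg$. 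Set $\delta=\bigwedge_i(\hat s_i\leftrightarrow\hat t_i)$ and $\boxdot\delta=\delta\land\Box\delta$.
\item Since $\propgl$ contains $\logic{K4}$, the global deduction theorem for transitive logics shows that $\alpha$ holds in all GL-algebras iff $\propgl\vdash\boxdot\delta\supset(\hat s\leftrightarrow\hat t)$. This uses Theorem \ref{propglcompleteness}.
\item By the failure in step 1, this formula is not a theorem of $\propgl$.
\item By the finite model property of $\propgl$, it is refuted at a point $w$ of a finite transitive irreflexive frame.
\item Pass to the subframe generated by $w$. It is still a finite $\propgl$-frame, the refutation at $w$ persists, and $\delta$ now holds at every point. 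So in the finite complex algebra $C$ of this subframe, the premises $s_i\laneq t_i$ hold under the valuation but $s\laneq t$ fails. Hence $C|_{\cmdl}\not\vl\alpha$.
\end{enumerate}

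It remains to show that every finite GL-algebra $C$ is a GLD-algebra, since GLD-algebras are GLD-lattices, as the paper notes after Definition \ref{defgldl}.
\begin{enumerate}
\item Transitivity $\Box x\leq\Box\Box x$ holds in every GL-algebra, since $\propgl\vdash\Box p\supset\Box\Box p$.
\item The chain $\Box^n0$ is increasing, because $0\leq\Box0$ and $\Box$ is monotone.
\item Since $C$ is finite, the chain stabilises at some $b$ with $\Box b=b$.
\item By the L\"ob inequality, $1=\Box1=\Box(\Box b\pscom b)\leq\Box b=b$, so $b=1$.
\item Hence $\bigsqcup_n\Box^n0=1$, and this join is finite.
\end{enumerate}
So $C|_{\cmdl}\in\cgld$ and $C|_{\cmdl}\not\vl\alpha$, contradicting $\cgld\vl\alpha$.

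The main obstacle is steps 3 to 6 of the second direction: passing from a quasi-identity to a single $\propgl$-formula and making sure the premises hold globally in the finite countermodel. I would check carefully that $\boxdot$ suffices for transitive frames and that generated subframes preserve $\propgl$. The finite model property itself is cited as a standard fact about $\propgl$.
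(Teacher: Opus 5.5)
Your proposal is correct, and it proves more than the paper's own proof does.

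For the direction $\cglm|_{\cmdl}\vl\alpha\Rightarrow\cgld\vl\alpha$ you use the paper's key ingredient, Corollary~\ref{cgldembcgl}. The paper goes through Theorems~\ref{birkhoffth} and~\ref{malcevth} to place $\cgld$ inside $\clqv(\cglm|_{\cmdl})$. As you observe, the only thing that detour actually uses is that quasi-identities are preserved under $\cliso\clsub$.

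The converse is where you differ. The paper says only ``the if-part is straightforward''. The obvious justification would be $\cglm|_{\cmdl}\subseteq\cgld$, and that inclusion is false. Your $\langle\omega+1,R\rangle$ example shows this: there $\bigsqcup_{n}\Box^{n}0=\omega\neq\omega+1$. Reading the subscript literally, as ``reducts that happen to lie in $\cgld$'', would make the if-part trivial. But the paper's main theorem uses the statement for all of $\cglm|_{\cmdl}$, so your reading is the one that matters.

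Your argument fills this hole. It has four steps.
\begin{enumerate}
\item Reduce $\alpha$ to the single formula $(\delta\land\Box\delta)\supset(\hat s\leftrightarrow\hat t)$. Only the easy half is needed here: soundness together with $\Box 1=1$.
\item Refute this formula on a finite strict partial order, using the finite model property of $\propgl$.
\item Cut down to the subframe generated by the refuting point, so that the premises hold globally.
\item Show that every finite GL-algebra is a GLD-algebra. The increasing chain $\Box^{n}0$ stabilises at some $b$ with $\Box b=b$, and L\"ob then forces $b=1$.
\end{enumerate}

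The cost is an external input, Segerberg's finite model property for $\propgl$. The paper never invokes it; it only has Theorem~\ref{propglcompleteness}. The gain is that this direction is actually proved. It is exactly the soundness half of the paper's main theorem: the infinitary GLD-rules preserve $\propgl$-theoremhood even though they fail under fixed valuations in GL-algebras such as the $\omega+1$ example.

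As a by-product you get $\clqv(\cgld)=\clqv(\cglm|_{\cmdl})$. Both coincide with the quasi-variety generated by the reducts of finite GL-algebras.
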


\begin{proof}
The if-part is straightforward. We show the only-if part. 
It is clear that $\cglm$ is a variety. Therefore, 
it is closed under reduced products by Theorem \ref{birkhoffth}. 
Hence, by Theorem \ref{malcevth} and Corollary \ref{cgldembcgl},  
$$
\cgld
\subseteq
\cliso\clsub(\cglm|_{\cmdl})
=
\cliso\clsub\clprodr(\cglm|_{\cmdl})
=
\clqv(\cglm|_{\cgld }). 
$$
Since 
$\clqv(\cglm|_{\cgld })=\modelop(\qiop(\cglm|_{\cgld }))$,  
$\cglm|_{\cgld }\vl \alpha$ implies $\cgld\vl \alpha$.   
\end{proof}

\section{Proof system for the positive fragment of $\propgl$}\label{sec:proofsystem}

In this section, we present a proof system for the 
positive fragment of $\propgl$. 
Here, $\propgl$ is  
the least normal modal logic that contains 
$\logic{K}$ and the L\"{o}b formula 
$\Box(\Box p\supset p)\supset \Box p$, 
and a formula is said to be {\em positive} if it 
contains neither negation nor implication.

\begin{definition}
The language of positive modal logic consists
of the following symbols:  
\begin{enumerate} 
\item
a countable set $\propvar$ of propositional variables;

\item
logical constants: $\bot$ and $\top$;

\item logical connectives: 
$\lor$,  $\land$;

\item
 modal operators:
$\Diamond$, $\Box$.
\end{enumerate}
\end{definition}

\begin{definition}
The set $\Phi$ of {\em positive modal formulas} is 
the smallest set that satisfies:

\begin{enumerate}
\item
$\propvar\subseteq\Phi$ and 
$\top$ and $\bot$ are in $\Phi$;

\item 
if $\phi$ and $\psi$ are in $\Phi$
then $(\phi\lor\psi)\in \Phi$ and 
$(\phi\land\psi)\in \Phi$;

\item 
if $\phi\in\Phi$ 
then $(\Diamond\phi)$ and 
$(\Box\phi)$  are in $\Phi$. 
\end{enumerate}
For each positive modal formula $\phi$ and $n\in\omega$, 
$\Box^{0}\phi$ denotes $\phi$ and $\Box^{n+1}\phi$ denotes $\Box(\Box^{n}\phi)$. 
We define $\Diamond^{n}\phi$ in the same way. 
A {\em sequent}  is an expression $\phi\db\psi$, where 
$\phi$ and $\psi$ are positive modal formulas. 
\end{definition}

\begin{definition}\label{algmodel}
An {\em algebraic model} for positive modal formulas
is a pair $\langle A,v\rangle$, where $A$ is a modal 
distributive lattice and $v$ is a function from the set $\propvar$
of propositional variables to $A$. 
The domain of the function $v$ is recursively extended to the set of 
all positive modal formulas as follows: 
\begin{enumerate}
\item
$v(\bot)=0$, $v(\top)=1$; 

\item
$v(\phi\lor\psi)=v(\phi)\sqcup v(\psi)$, 
$v(\phi\land\psi)=v(\phi)\sqcap v(\psi)$;

\item
$v(\Diamond\phi)=\Diamond v(\phi)$, 
$v(\Box\phi)=\Box v(\phi)$.  
\end{enumerate}

Let 
$A$ be a modal distributive lattice. 
For every positive modal formula $\phi$, 
we write 
$
A\vl\phi
$,   
if 
$v(\phi)=1$ for every $v:\propvar\yy A$. 
Let $C$ be a class of modal distributive lattices. 
We write
$
C\vl\phi
$, 
if 
$
A\vl\phi
$ 
for every $A\in C$. 
For every sequent $\phi\db\psi$, we write
$
A\vl\phi\db\psi
$,   
if 
$v(\phi)\leq v(\psi)$ for every $v:\propvar\yy A$. 
Let $C$ be a class of modal distributive lattices. 
We write
$
C\vl\phi\db\psi
$, 
if 
$
A\vl\phi\db\psi
$ 
for every $A\in C$. 
\end{definition}

Semantically, any sequent of positive modal formulas is 
equivalent to an identity of modal distributive lattices. 
More explicitly, we have the following:

\begin{theorem}\label{formulaidentity}
Let $\rho$ be a function from the set of positive modal formulas 
to the set of terms of modal distributive lattices 
that satisfies the following: 
\begin{enumerate}
\item
$\rho$ is an injection from the set $\propvar$ of propositional variables to 
the set $X$ of variables;
\item
$\rho(\bot)=\lanzero$, 
$\rho(\top)=\lanone$; 
\item
$\rho(\phi\lor \psi)=\lansqcup(\rho(\phi),\rho(\psi))$, 
$\rho(\phi\land \psi)=\lansqcap(\rho(\phi),\rho(\psi))$; 
\item
$\rho(\Diamond\phi)=\landiamond(\rho(\phi))$, 
$\rho(\Box\phi)=\lanbox(\rho(\phi))$.  
\end{enumerate}
Then, 
for every modal distributive lattice $A$, 
\begin{enumerate}
\item
$A\vl\phi$ if and only if $A\vl\rho(\phi)\laneq\lanone$; 
\item
$A\vl\phi\db\psi$ if and only if 
$A\vl\lansqcup(\rho(\phi),\rho(\psi))\laneq\rho(\psi)$. 
\end{enumerate}
\end{theorem}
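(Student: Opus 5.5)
The plan is to link valuations with variable assignments and then prove, by structural induction on positive modal formulas, that the two ways of computing a value agree. Fix a modal distributive lattice $A$. Since $\rho$ restricted to $\propvar$ is an injection into $X$, every valuation $v:\propvar\yy A$ can be presented as the assignment $a$ on $X$ with $a(\rho(p))=v(p)$. The remaining variables of $X$ receive arbitrary values, for instance $0$. Conversely, every assignment $a:X\yy A$ induces the valuation $v_{a}=a\circ\rho|_{\propvar}$.

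The key lemma is that for every positive modal formula $\phi$ and every assignment $a$ we have
\begin{equation*}
\rho(\phi)^{A}[a]=v_{a}(\phi).
\end{equation*}
This is proved by induction on $\phi$. The base cases are propositional variables, which hold by the definition of $v_{a}$, and $\bot,\top$, which hold by clause (2) of $\rho$ together with the interpretation of $\lanzero,\lanone$. The inductive cases for $\lor,\land,\Diamond,\Box$ follow immediately from clauses (3) and (4) and the recursive clauses of Definition \ref{algmodel}. Note that $\rho(\phi)$ only contains variables of the form $\rho(p)$, so the value does not depend on $a$ outside the image of $\rho$.

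Both statements then follow. For (1): $A\vl\phi$ means $v(\phi)=1$ for all $v$. By the lemma and the surjectivity of $a\mapsto v_{a}$ onto valuations, this is equivalent to $\rho(\phi)^{A}[a]=1$ for all assignments $a$, that is, $A\vl\rho(\phi)\laneq\lanone$. Here each valuation $v$ arises as $v_{a}$ for the extension $a$ described above, using injectivity of $\rho$ on $\propvar$. For (2): in a lattice, $v(\phi)\leq v(\psi)$ holds iff $v(\phi)\sqcup v(\psi)=v(\psi)$. By the lemma, the latter equals the statement $\lansqcup(\rho(\phi),\rho(\psi))^{A}[a]=\rho(\psi)^{A}[a]$. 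Quantifying over all $v$, equivalently all $a$, gives the claim.

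The only mildly delicate point is the correspondence between valuations and assignments, in particular making sure that every valuation is realized by some assignment. This needs injectivity of $\rho$ on $\propvar$, since otherwise two variables with different values could be identified. Everything else is routine unfolding of definitions.
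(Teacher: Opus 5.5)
Your proposal is correct and takes the same route as the paper. The paper's proof simply asserts, as a straightforward induction, the identity $v(\phi)=\rho(\phi)^{A}(v(p_{1}),\ldots,v(p_{n}))$; you carry out that induction and also make explicit the correspondence between valuations and assignments, including where injectivity of $\rho$ on $\propvar$ is needed.
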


\begin{proof}
Let $\phi$ be a positive modal formula.
Suppose that $p_{1},\ldots,p_{n}$ are the list of all propositional variables 
that occur in $\phi$. 
Then, it is straightforward to prove that for every $v:\propvar\yy A$, we have
$v(\phi)=\rho(\phi)^{A}(v(p_{1}),\ldots,v(p_{n}))$. 
\end{proof}

Now, we introduce a sequent-style proof system $\psgld$.

\begin{definition}
The axioms and the inference rules of $\psgld$ are the following: 
\begin{enumerate}
\item
Reflexivity: 
$\phi\db\phi$;

\item
Transitivity: 
$
\dfrac
{\phi\db\psi,\ \psi\db\chi}
{\phi\db\chi};
$

\item
Disjunction Introduction: 
$\phi\db\phi\lor\psi$, $\psi\db\phi\lor\psi$; 

\item
Disjunction Elimination: 
$\dfrac
{\phi\db\chi,\ \psi\db\chi}{\phi\lor\psi\db\chi}
$;

\item
Conjunction Elimination:
$\phi\land\psi\db\phi$, $\phi\land\psi\db\psi$;

\item
Conjunction Introduction: 
$\dfrac
{\chi\db\phi,\ \chi\db\psi}{\chi\db\phi\land\psi}
$;

\item
Distribution: 
$\phi\land(\psi\lor\chi)\db(\phi\land\psi)\lor(\phi\land\chi)$;

\item
Becker's Rules: 
$
\dfrac{\phi\db\psi}{\Diamond\phi\db\Diamond\psi}
$, 
$
\dfrac{\phi\db\psi}{\Box\phi\db\Box\psi}
$; 

\item
Linearity: 
$\Diamond(\phi\lor\psi)\db\Diamond\phi\lor\Diamond\psi$, 
%
%
%
$\Box\phi\land\Box\psi\db\Box(\phi\land\psi)$; 
\item
$\Box$-$\Diamond$ Interaction: 
$\Diamond\phi\land\Box\psi\db\Diamond(\phi\land\psi)$, 
$\Box(\phi\lor\psi)\db\Box\phi\lor\Diamond\psi$; 

\item
Bottom: $\bot\db\phi$;

\item
Top: $\phi\db\top$; 

\item
Possibilization: 
$\Diamond\bot\db\bot$;

\item 
Necessitation: $\top\db\Box\top$;

\item
Axiom 4: 
$\Diamond\Diamond \phi\db\Diamond\phi$, 
$\Box\phi\db\Box\Box\phi$;

\item
GLD-rules: 
$
\dfrac
{\psi\land\Box^{n}\bot\db\phi\ \text{ ($\forall n\in\omega$) }}
{\psi\db\phi}
$,
$
\dfrac
{\phi\db\psi\lor \Diamond^{n}\top \text{ ($\forall n\in\omega$) }}
{\phi\db\psi}
$.
\end{enumerate}
\end{definition}

The proof system $\logic{K}_{+}^{\top\bot}$ 
introduced by Dunn \cite{dnn95} consists of  
the axioms and the inference rules 
(1)-(14) of the above definition. 
Note that each GLD-rule has countably many upper sequents.

\begin{lemma}\label{lemlind}
Let $\cong$ be 
an equivalence relation on the set $\Phi$ of all positive modal formulas 
such that 
$\phi\cong\psi$ if and only if $\phi\db\psi$ and $\psi\db\phi$ are provable in 
$\psgld$.  
Then, 
$A=\langle \Phi/{\cong};\lor,\land,\Diamond,\Box,0,1\rangle$ 
is a GLD-lattice, 
in which operators $0$, $1$, $\lor$, $\land$, 
$\Diamond$ and $\Box$ on $\Phi/{\cong}$ 
are defined by the corresponding logical symbols of representatives. 
\end{lemma}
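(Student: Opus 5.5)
This is the standard Lindenbaum--Tarski construction, with one extra step for the infinitary condition. We proceed in four steps.

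\textbf{Step 1: the quotient and its lattice structure.} First, $\cong$ is an equivalence relation: Reflexivity gives reflexivity, symmetry is built into the definition, and the Transitivity rule gives transitivity. Next, $\cong$ is a congruence for $\lor$, $\land$, $\Diamond$, $\Box$. For $\lor$: from $\phi\db\phi'$, Disjunction Introduction gives $\phi'\db\phi'\lor\psi'$, so Transitivity yields $\phi\db\phi'\lor\psi'$; similarly $\psi\db\phi'\lor\psi'$, and Disjunction Elimination gives $\phi\lor\psi\db\phi'\lor\psi'$. The case of $\land$ is dual, using Conjunction Elimination and Introduction. For the modal operators, Becker's Rules give the congruence property directly. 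Hence the operations on $\Phi/{\cong}$ are well defined.

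Define $[\phi]\leq[\psi]$ iff $\phi\db\psi$ is provable. This is a partial order, and the introduction and elimination rules show that $[\phi\lor\psi]$ and $[\phi\land\psi]$ are the join and meet. Bottom and Top make $[\bot]$ and $[\top]$ the least and greatest elements. Distribution gives one half of the distributive law, and the converse inequality holds in every lattice. So the reduct is a bounded distributive lattice.

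\textbf{Step 2: the modal and GLD axioms (1)--(3).} For Definition \ref{defmodaldl} and items \eqref{minmax}--\eqref{gldltransitivity} of Definition \ref{defgldl}:
\begin{itemize}
\item $\Diamond 0=0$ follows from Possibilization together with Bottom.
\item $\Box 1=1$ follows from Necessitation together with Top.
\item The distributivity equalities: Linearity gives one inequality in each case, and the other follows from Becker's Rules applied to the lattice inequalities, e.g.\ $\Diamond\phi\db\Diamond(\phi\lor\psi)$.
\item The two inequalities \eqref{boxdia} are exactly the $\Box$-$\Diamond$ Interaction axioms.
\item Item \eqref{gldltransitivity} is Axiom 4.
\end{itemize}

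\textbf{Step 3: the infinitary condition \eqref{ioriand}.} This is the main point. Take the element $[\psi]$ and consider the family $\{[\psi\land\Box^{n}\bot]\}_{n\in\omega}$. Note that $\Box^{n}0$ in $A$ is $[\Box^{n}\bot]$, by induction on $n$. Each member of the family lies below $[\psi]$ by Conjunction Elimination, so $[\psi]$ is an upper bound. Now let $[\phi]$ be any upper bound. Then $\psi\land\Box^{n}\bot\db\phi$ is provable for every $n$, so the first GLD-rule gives $\psi\db\phi$. Hence $[\psi]$ is the least upper bound, i.e.\ $\bigsqcup_{n}([\psi]\sqcap\Box^{n}0)=[\psi]$.

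The meet condition is dual. $[\psi]$ is a lower bound of $\{[\psi\lor\Diamond^{n}\top]\}_{n\in\omega}$ by Disjunction Introduction. If $[\phi]$ is any lower bound, then $\phi\db\psi\lor\Diamond^{n}\top$ holds for every $n$, and the second GLD-rule yields $\phi\db\psi$. So $[\psi]$ is the greatest lower bound.

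\textbf{Step 4: where the care lies.} Step 3 is the only place where the infinitary rules enter, and the one point needing care is that the supremum must be taken in $A$ itself. It must not be read as a syntactic disjunction, since $A$ need not be complete. The argument above establishes that the supremum exists and equals $[\psi]$, using only the order on $A$, and nothing more is needed. All remaining verifications are routine checks that each axiom or rule of $\psgld$ translates into the corresponding algebraic law.
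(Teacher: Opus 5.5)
Your proposal is correct and follows essentially the same route as the paper. Both arguments identify provability of $\phi\db\psi$ with $[\phi]\leq[\psi]$, read off the modal distributive lattice laws and items (1)--(3) from the matching axioms and rules, and obtain item (4) by using the GLD-rules to show that $[\psi]$ lies below every upper bound of $\{[\psi]\sqcap\Box^{n}[\bot]\}_{n\in\omega}$, and dually above every lower bound of $\{[\psi]\sqcup\Diamond^{n}[\top]\}_{n\in\omega}$; the only difference is that you spell out the congruence and lattice checks that the paper dismisses as ``clear''.
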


\begin{proof}
Let $\phi$ and $\psi$ be any positive modal formulas and let 
$[\phi]$ and $[\psi]$ be the equivalence classes 
in $\Phi/{\cong}$, respectively. 
Then, 
\begin{align*}
\text{$\phi\db\psi$ is provable in $\psgld$}
&\eq
\text{$\phi\db\phi\land\psi$ is provable in $\psgld$}\\
&\eq
\phi\land\psi\cong\phi\\
&\eq
[\phi]\sqcap[\psi]=[\phi\land\psi]=[\phi]\\
&\eq
[\phi]\leq[\psi]. 
\end{align*}
Hence, 
\begin{equation}\label{lindorder}
\text{
$\phi\db\psi$ is provable in $\psgld$ 
$\eq$
$[\phi]\leq[\psi]$ in $A$
}
\end{equation}
for any positive modal formulas $\phi$ and $\psi$. 
By \eqref{lindorder}, it is clear that $A$ is a 
modal distributive lattice. 
We show 
$$
\bigsqcup_{n\in\omega}\left([\phi]\sqcap\Box^{n}[\bot]\right)= [\phi]
$$
for every positive modal formula $\phi$. 
By Conjunction Elimination, $\phi\land \Box^{n}\bot\db\phi$ 
is provable in $\psgld$ 
for all $n\in\omega$. 
Hence, 
$[\phi]\sqcap\Box^{n}[\bot]\leq[\phi]$ 
by \eqref{lindorder}. 
Suppose that a positive modal formula $\psi$ satisfies 
$[\phi]\sqcap\Box^{n}[\bot]\leq[\psi]$ for all $n\in\omega$. 
Then, by \eqref{lindorder}, 
$\phi\land \Box^{n}\bot\db\psi$ is provable 
in $\psgld$ 
for all $n\in\omega$.  
Hence, by GLD-rule, $\phi\db\psi$ is provable in $\psgld$. 
Therefore, 
$[\phi]\leq[\psi]$ by \eqref{lindorder}.  
The other equality of  
\eqref{ioriand} of Definition \ref{defgldl}
is proved in the same way. 
\end{proof}

Now, we show that the proof system $\psgld$ is sound and complete with respect to 
the class $\cgld$ of all GLD-lattices.

\begin{theorem} \label{psgldcompleteness}
For all positive modal formulas  $\phi$ and $\psi$, 
$\phi\db\psi$ is provable in $\psgld$ if and only if 
$\cgld\vl\phi\db\psi$. 
\end{theorem}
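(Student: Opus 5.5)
We prove soundness and completeness separately; completeness is essentially immediate from Lemma \ref{lemlind}.

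\emph{Soundness.} We show by induction on the (possibly infinite, but well-founded) derivation that every provable sequent $\phi\db\psi$ satisfies $v(\phi)\leq v(\psi)$ for every GLD-lattice $A$ and every $v:\propvar\yy A$. Since derivations with $\omega$-rules are well-founded trees, induction on derivations is legitimate.

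\begin{itemize}
\item Rules (1)--(7) hold in any distributive lattice.
\item Becker's rules hold because $\Box$ and $\Diamond$ are monotone, by \eqref{distributivity} of Definition \ref{defgldl}.
\item Linearity and Axiom 4 are exactly \eqref{distributivity} and \eqref{gldltransitivity}.
\item The $\Box$-$\Diamond$ interaction axioms are \eqref{boxdia} of Definition \ref{defmodaldl}.
\item Bottom and Top are immediate, and Possibilization and Necessitation are \eqref{minmax}.
\end{itemize}

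For the first GLD-rule, suppose $v(\psi)\sqcap\Box^{n}0\leq v(\phi)$ for all $n$. Then $v(\phi)$ is an upper bound of $\{v(\psi)\sqcap\Box^{n}0\mid n\in\omega\}$. By \eqref{ioriand} the join of this set exists and equals $v(\psi)$, so $v(\psi)\leq v(\phi)$. Here we use $v(\Box^{n}\bot)=\Box^{n}0$. The second GLD-rule is handled dually: $v(\phi)$ is a lower bound of $\{v(\psi)\sqcup\Diamond^{n}1\}$, whose meet is $v(\psi)$.

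\emph{Completeness.} Suppose $\phi\db\psi$ is not provable in $\psgld$. Let $A=\Phi/{\cong}$ be the Lindenbaum algebra of Lemma \ref{lemlind}, which is a GLD-lattice. Take the canonical valuation $v(p)=[p]$. A routine induction on formulas shows $v(\chi)=[\chi]$ for every positive modal formula $\chi$, using the fact that the operations on $A$ are defined through representatives. By the equivalence \eqref{lindorder} established in the proof of Lemma \ref{lemlind}, $[\phi]\not\leq[\psi]$, so $A\not\vl\phi\db\psi$ and therefore $\cgld\not\vl\phi\db\psi$.

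The only point needing care is well-definedness of the operations on $\Phi/{\cong}$, which Lemma \ref{lemlind} takes for granted. It follows from Becker's rules and from the introduction and elimination rules for $\land$ and $\lor$, which make $\cong$ a congruence. The remaining step to check is soundness of the $\omega$-rules, which relies precisely on the infinitary conditions \eqref{ioriand}. Everything else is routine verification.
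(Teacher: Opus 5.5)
Your proposal is correct and follows the paper's proof. Soundness is by induction on the well-founded derivation, with the GLD-rules justified by the infinitary join and meet conditions \eqref{ioriand}. Completeness is via the Lindenbaum algebra of Lemma~\ref{lemlind} with the canonical valuation $v(p)=[p]$; you also spell out the rule-by-rule soundness check and the congruence property of $\cong$, which the paper leaves implicit.
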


\begin{proof}
Soundness is proved by induction on the height of the derivations 
in $\psgld$. 
We show completeness. 
Suppose $\phi\db\psi$ is not provable in $\psgld$. Then, 
$[\phi]\not\leq[\psi]$ in the modal distributive lattice $A$ given in 
Lemma \ref{lemlind}. 
Define $v:\propvar\yy A$ by $v(p)=[p]$ for every propositional 
variable $p$. It is straightforward to show that $v(\phi)=[\phi]$ for 
every positive modal formula $\phi$. 
Hence, $A\not\vl\phi\db\psi$. 
Therefore, $\cgld\not\vl\phi\db\psi$ by Lemma \ref{lemlind}. 
\end{proof}

The following is the main result of this paper. 

\begin{theorem}
Let $\phi$ and $\psi$ be any positive modal formulas. 
Then, $\phi\supset\psi\in\propgl$
if and only if 
$\phi\db\psi$ is provable in $\psgld$.  
\end{theorem}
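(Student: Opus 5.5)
The plan is to chain together Theorem \ref{propglcompleteness}, Theorem \ref{formulaidentity}, Theorem \ref{qiequivalence} and Theorem \ref{psgldcompleteness}. The main result then reduces to the equivalence
$$
\phi\supset\psi\in\propgl
\eq
\cglm|_{\cmdl}\vl\phi\db\psi .
$$
Here we read the class $\cglm|_{\cgld}$ in Theorem \ref{qiequivalence} as $\cglm|_{\cmdl}$, the class of modal distributive lattice reducts of GL-algebras.

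The first step is to compare validity in a GL-algebra with validity in its reduct. Let $B\in\cglm$. An algebraic model on $B$ (for the full modal language) and one on $B|_{\cmdl}$ (for positive formulas) assign the same element to every positive formula. The only point to check is that $\Diamond$ in $B|_{\cmdl}$ is $-\Box-$, which is exactly how the reduct is formed. So $v(\phi\supset\psi)=-v(\phi)\sqcup v(\psi)$, and this equals $1$ iff $v(\phi)\leq v(\psi)$. Hence $B\vl\phi\supset\psi$ iff $B|_{\cmdl}\vl\phi\db\psi$. Combining this with Theorem \ref{propglcompleteness} gives the displayed equivalence.

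The second step uses Theorem \ref{formulaidentity}(2). It says that $\cglm|_{\cmdl}\vl\phi\db\psi$ iff $\cglm|_{\cmdl}$ validates the identity $\lansqcup(\rho(\phi),\rho(\psi))\laneq\rho(\psi)$. An identity is a quasi-identity, so Theorem \ref{qiequivalence} applies. Validity of this identity in $\cglm|_{\cmdl}$ is therefore equivalent to its validity in $\cgld$. Applying Theorem \ref{formulaidentity}(2) once more, that is equivalent to $\cgld\vl\phi\db\psi$. By Theorem \ref{psgldcompleteness}, this holds iff $\phi\db\psi$ is provable in $\psgld$.

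The only delicate point is the first step. One has to make sure the reduct really is a modal distributive lattice whose operations agree with the evaluation of positive formulas. For a normal $\Box$ with $\Diamond=-\Box-$, the interaction axioms \eqref{boxdia} hold in any Boolean algebra with a normal operator, so this is routine. One also has to use the algebraic completeness of $\propgl$ in the form of Theorem \ref{propglcompleteness}, namely that $\phi\supset\psi\in\propgl$ iff every GL-algebra under every valuation sends it to $1$. Everything else is bookkeeping through the cited theorems.
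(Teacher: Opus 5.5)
Your proposal is correct and is essentially the paper's own proof: the same chain of equivalences through Theorem~\ref{propglcompleteness}, Theorem~\ref{formulaidentity}, Theorem~\ref{qiequivalence}, Theorem~\ref{formulaidentity} again, and Theorem~\ref{psgldcompleteness}, with $\cglm|_{\cgld}$ rightly read as $\cglm|_{\cmdl}$. Your explicit check spells out the step the paper attributes only to Definition~\ref{algmodel}: positive formulas get the same value in $B$ and in $B|_{\cmdl}$, and $-v(\phi)\sqcup v(\psi)=1$ holds iff $v(\phi)\leq v(\psi)$.
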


\begin{proof}
\begin{align*}
&\phi\supset\psi\in\propgl\\
&\eq
\forall A\in\cglm\forall v:\propvar\yy A
\left(
v(\phi\supset\psi)=1
\right)
&
(\text{Theorem \ref{propglcompleteness}})\\
&\eq
\cglm|_{\cmdl}\vl \phi\db\psi
&
(\text{Definition \ref{algmodel}})\\
&\eq
\cglm|_{\cmdl}
\vl\lansqcup(\rho(\phi),\rho(\psi))\laneq\rho(\psi)
&
(\text{Theorem \ref{formulaidentity}})\\
&\eq
\cgld\vl\lansqcup(\rho(\phi),\rho(\psi))\laneq\rho(\psi)
&
(\text{Theorem \ref{qiequivalence}})\\
&\eq
\cgld\vl\phi\db\psi
&
(\text{Theorem \ref{formulaidentity}})\\
&\eq
\text{
$\phi\db\psi$ is provable in $\psgld$
}.
&
(\text{Theorem \ref{psgldcompleteness}})
\end{align*}
\end{proof}

As we have proved in Theorem \ref{qiequivalence}, 
not only identities, but every 
quasi-identity $\alpha$ of modal distributive lattices satisfies 
$\cgld\vl\alpha$ if and only if $\cglm|_{\cmdl}\vl\alpha$. 
Now, consider the expressions that consist of sequents of 
modal distributive lattices of the form 
\begin{equation}\label{hypersq}
\phi_{1}\db\psi_{1},\ldots,\phi_{n}\db\psi_{n}\thn
\phi\db\psi. 
\end{equation}
We shall tentatively refer to an expression of this form as a 
{\em quasi-sequent}. 
Define the semantic interpretation of quasi-sequents as follows: 
for each modal distributive lattice $A$, 
a quasi-sequent \eqref{hypersq} is valid in $A$ if and only if for every 
$v:\propvar\yy A$, 
if $v(\phi_{i})\leq v(\psi_{i})$ for every $i=1,\ldots,n$, then
$v(\phi)\leq v(\psi)$. 
Then, by a similar argument to that of Theorem \ref{formulaidentity}, 
we can show that
$\cgld\vl\alpha$ if and 
only if $\cglm|_{\cmdl}\vl\alpha$, 
for every quasi-sequent $\alpha$. 
It might be a problem of some interest to find a proof system 
for quasi-sequents that is sound and complete with respect to $\cgld$.

\bibliographystyle{plain}
\bibliography{myref.sjis}

\end{document}